\newcommand{\nl}{\operatorname{NumbSeq}}
\newcommand{\weight}{\operatorname{\Omega}}
\newcommand{\hp}{\operatorname{hg}}
\newcommand{\HP}{\operatorname{HG}}
\newcommand{\NMHP}{\operatorname{SHG}}
\newcommand{\SHP}{\operatorname{THG}}
\newcommand{\nmhp}{\operatorname{shg}}
\newcommand{\shp}{\operatorname{thg}}
\newcommand{\chp}{\operatorname{chg}}
\newcommand{\hpUV}{\operatorname{hg}^{\setminus U, V}}
\newcommand{\crit}{\Lambda}
\newcommand{\bigO}{\mathcal{O}}
\newcommand{\transp}[1]{#1^T}
\newtheorem{theorem}{Theorem}
\newtheorem{lemma}[theorem]{Lemma}
\newtheorem{corollary}[theorem]{Corollary}
\begin{document}

\title{Phase Transition of Random Non-Uniform Hypergraphs
\footnote{This work was partially founded by the ANR Boole, the ANR Magnum, the Amadeus program and the Univ Paris Diderot, Sorbonne Paris Cit\'e (UMR 7089).}}

\author{\'Elie de Panafieu} 
\affil{RISC Institute,\\
  J. Kepler University, Linz, Austria}

\maketitle

\begin{abstract}
Non-uniform hypergraphs appear in various domains of computer science 
as in the satisfiability problems and in data analysis.
We analyze a general model
where the probability for an edge of size~$t$
to belong to the hypergraph depends on a parameter~$\omega_t$
of the model.
It is a natural generalization of the models of graphs
used by Flajolet, Knuth and Pittel~\cite{FKP89} 
and Janson, Knuth, \L{}uczak and Pittel~\cite{JKLP93}.
The present paper follows the same general approach
based on analytic combinatorics.
We show that many analytic tools developed 
for the analysis of graphs can be extended surprisingly well
to non-uniform hypergraphs.
Specifically, we investigate random hypergraphs with a large number of vertices~$n$
and a complexity, defined as the \emph{excess}, proportional to~$n$.
We analyze their typical structure before, near and after the birth of the \emph{complex} components,
that are the connected components with more than one cycle.
Finally, we compute statistics of the model to link number of edges and excess.
\end{abstract}

\noindent \textbf{Keywords:} hypergraph, phase transition, analytic combinatorics. 

    \section{Introduction}

In the seminal article~\cite{ER60},
Erd\H{o}s and R\'enyi discovered an abrupt change of the structure of a random graph
when the number of edges reaches half the number of vertices.
It corresponds to the emergence of the first connected component 
with more than one cycle, immediately followed by components
with even more cycles.
The combinatorial analysis of those components 
improves the understanding of the objects
modeled by graphs and has application
in the analysis and the conception of graph algorithm.
The same motivation holds for hypergraphs
which are used, among others, to represent databases and xor-formulas.

Much of the literature on hypergraphs is restricted to the uniform case,
where all the edges contain the same number of vertices.
In particular, the analysis of the birth of the complex component 
in terms of the size of the components and the order of the phase transition
can be found in~\cite{KL02}, \cite{Co04}, \cite{DM08}, \cite{GZVCN09} and~\cite{R10}.

There is no canonical choice for the size of a random edge
in a hypergraph; thus several models have been proposed.
One is developed in~\cite{SS85}, where the size of the largest connected component
is obtained using probabilistic methods.
It is our opinion that to be general,
a non-uniform hypergraph model needs one parameter
for each possible size of edges,
in order to quantify how often those edges appear.
In~\cite{DN04}, Darling and Norris define such a model,
the Poisson random hypergraphs model,
and analyze its structure via fluid limits 
of pure jump-type Markov processes.

We have not found in the literature much use of the generating function
of non-uniform hypergraphs to investigate their structure,
and we intend to fill this gap.
However, similar generating functions have been derived in~\cite{GK05}
for a different purpose:
Gessel and Kalikow use it to give a combinatorial interpretation 
for a functional equation of Bouwkamp and de Bruijn.
The underlying hypergraph model is a natural generalization
of the \emph{multigraph process}.

In Section~\ref{sec:definitions} we introduce the hypergraph models,
the probability distribution and the corresponding generating functions.
The important notion of \emph{excess} is also defined. 
Section~\ref{sec:hpnk} is dedicated to the asymptotic number
of hypergraphs with~$n$ vertices and excess~$k$.
Statistics on the random hypergraphs are derived,
including the limit distribution of the number of edges.
Section~\ref{sec:forest} focuses on hypergraphs
with small excess (subcritical), which are composed only 
of trees and unicycle components with high probability.
The critical excess at which the first complex component appears
is obtained in Section~\ref{sec:birth}. 
For a range of excess near and before this critical value,
we compute the probability that a random hypergraph 
contains no complex component. 
The classical notion of \emph{kernel} is introduced for hypergraphs
in Section~\ref{sec:kernels}.
It is then used to derive
the asymptotics of connected hypergraphs 
with~$n$ vertices and fixed excess~$k$.
%
We derive in Section~\ref{sec:hpcomplex} 
the structure of random hypergraphs in the critical window,
and obtain a surprising result: 
although the critical excess is generally different for graphs and hypergraphs,
both models share the same structure distribution 
exactly at their respective critical excess.
Finally, we give an intuitive explanation of the birth of the \emph{giant component}
in Section~\ref{sec:giant} and prove that there is with high probability
a component with an unbounded excess in random hypergraphs with supercritical excess.

    \section{Presentation of the Model} \label{sec:definitions}

In this paper, a hypergraph~$G$ is a multiset~$E(G)$ of~$m(G)$ edges.
Each edge~$e$ is a multiset of~$|e|$ vertices in~$V(G)$, where~$|e| \geq 2$.
The vertices of the hypergraph are labelled from~$1$ to~$n(G)$.
We also set~$l(G)$ for the \emph{size} of~$G$, defined by
\[
  l(G) = \sum_{e \in E(G)} |e| = \sum_{v \in V(G)} \deg(v).
\]
Those notions are illustrated in figure~\ref{fig:example}.

\begin{figure}
\begin{center}
\includegraphics[scale = 0.33]{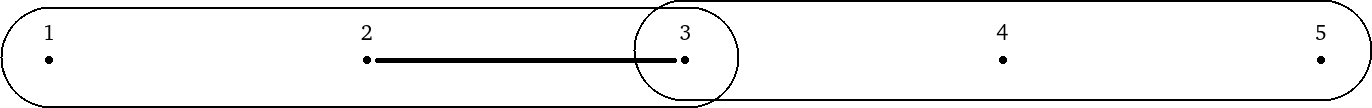}
\end{center}
\caption{Hypergraph with~$n=5$ vertices, $m = 3$ edges, excess~$k=0$, size~$l=8$ and~$\nl = 432$.
There is one cycle, which links the vertices $2$ and $3$.}
\label{fig:example}
\end{figure}

The notion of \emph{excess} was first used for graphs
in~\cite{W77}, then named in~\cite{JKLP93},
and finally extended to hypergraphs in~\cite{KL97}.
The excess of a connected component~$C$ is always greater than or equal to $-1$.
It expresses how far from a tree it is:
$C$ is a tree if and only if its excess is~$-1$,
contains exactly one cycle if its excess is~$0$,
and is said to be \emph{complex} if its excess is strictly positive.
Intuitively, a connected component with high excess is ``hard''
to treat for a backtracking algorithm.
The excess~$k(G)$ of a hypergraph~$G$ is defined by
\[
  k(G) = l(G) - n(G) - m(G).
\]

A hypergraph may contain several copies of the same edge
and a vertex may appear more than once in an edge;
thus we are considering multihypergraphs.
A hypergraph with no loop nor multiple edge is said to be \emph{simple}.
Since each edge is a multiset of vertices, 
the edges of a hypergraph~$G$ form a multiset of multisets of vertices~$E(G)$.
We define~$\nl(G)$ as the number of distinct orderings of the vertices in~$E(G)$.
For example, two possible orderings for the hypergraph from figure~\ref{fig:example} are
$((5,3,4),(3,2),(2,1,3))$ and $((2,3),(5,4,3),(2,1,3))$,
while $(5,1,4),(1,2),(2,3,1)$ would describe a different hypergraph.
In summary, $\nl(G)$ is the number of ways to write~$E(G)$ 
as a sequence of sequences of vertices.
If~$G$ is simple, then~$\nl(G)$ is equal to~$m(G)! \prod_{e \in E(G)} |e| !$,
otherwise it is smaller.
We associate to any family~$\mathcal{F}$ of hypergraphs the generating function
\begin{equation} \label{eq:gf}
  F(z,w,x) = 
    \sum_{G \in \mathcal{F}} 
      \frac{\nl(G)}{m(G)!} 
      \left( \prod_{e \in E(G)} \frac{\omega_{|e|}}{|e|!} \right)
      w^{m(G)} x^{l(G)} \frac{z^{n(G)}}{n(G)!}
\end{equation}
where~$\omega_t$ marks the edges of size~$t$, $w$ the edges, 
$x$ the size of the graph and~$z$ the vertices.
Therefore, we count hypergraphs with 
a \emph{weight}~$\kappa$
\begin{equation} \label{eq:weight}
  \kappa(G) = \frac{\nl(G)}{m(G)!} \prod_{e \in E(G)} \frac{\omega_{|e|}}{|e|!}
\end{equation}
that is the extension to hypergraphs of the \emph{compensation factor} 
defined in Section~$1$ of~\cite{JKLP93}.
We will conveniently refer to the sum of the weights of the hypergraphs in~$\mathcal{F}$
as the \emph{number of hypergraphs} in~$\mathcal{F}$.
If~$\mathcal{F}$ is a family of simple hypergraphs, then
this number of hypergraphs is the actual cardinality of $\mathcal{F}$.
In this case, we obtain the simpler and natural expression
\begin{equation} \label{eq:simplegf}
  F(z,w,x) = 
    \sum_{G \in \mathcal{F}} 
    \Big( \prod_{e \in E(G)} \omega_{|e|} \Big)
    w^{m(G)} x^{l(G)} \frac{z^{n(G)}}{n(G)!}.
\end{equation}
Observe that the generating function of the subfamily of hypergraphs of excess~$k$
is~$[y^k] F\left(z/y, w/y, x y\right)$,
where~$[x^n] \sum_k a_k x^k$ denotes the coefficient~$a_n$.

We define the exponential generating function of the edges as
\[
  \weight(z) := \sum_{t \geq 2} \omega_t \frac{z^t}{t!}.
\]
From now on, the~$(\omega_t)$ are considered as a bounded sequence 
of nonnegative real numbers with~$\omega_0 = \omega_1 = 0$.
The value~$\omega_t$ represents how likely an edge of size~$t$ is to appear.
Thus, for graphs we get~$\weight(z) = z^2/2$, 
for $d$-uniform hypergraphs (\textit{i.e.} with all edges of size $d$)
we have~$\weight(z) = z^d/d!$,
for hypergraphs with sizes of edges restricted to a set~$S$
we have~$\weight(z) = \sum_{s \in S} z^s$
and for hypergraphs with weight~$1$ for all size of edge~$\weight(z) = e^z-1-z$.
The hypothesis $\omega_0 = \omega_1 = 0$
means that the edges of size $0$ or $1$ are forbidden
(more specifically, any hypergraph that contains such an edge
will be counted with weight $0$).
To simplify the saddle point proofs,
we also assume that~$\weight(z)/z$ cannot
be written as~$f(z^d)$ for an integer~$d > 1$ and a power series~$f$
with a non-zero radius of convergence. 
This implies that~$e^{\weight(z)/z}$ is aperiodic.
Therefore, we do not treat
the important, but already studied, 
case of $d$-uniform hypergraphs for~$d > 2$
(those are the hypergraphs where all the edges have same size~$d$).

The generating function of all hypergraphs is
\begin{equation} \label{eq:hp}
  \hp(z,w,x) = \sum_n e^{w \weight(n x)} \frac{z^n}{n!}.
\end{equation}
This expression can be derived from~\eqref{eq:gf}
or using the symbolic method presented in~\cite{FS09}.
Indeed, $\weight(n x)$ represents an edge of size marked by~$x$
and~$n$ possible types of vertices,
and~$e^{w \weight(n x)}$ a set of edges.
For the family of simple hypergraphs,
\begin{equation} \label{eq:nmhp}
  \nmhp(z,w,x) = \sum_n \left( \prod_t (1 + \omega_t x^t w)^{\binom{n}{t}} \right) \frac{z^n}{n!}.
\end{equation}
Similar expressions have been derived in~\cite{GK05}.
The authors use them to give a combinatorial interpretation
of a functional equation of Bouwkamp and de Bruijn.

A hypergraph with~$n$ vertices and~$m$ labelled edges
can be represented by an~$(n,m)$-matrix~$M$ 
with nonnegative integer coefficients,
the coefficient~$M_{v,e}$ being the number of occurrences
of the vertex~$v$ in the edge~$e$.
In this representation, multigraphs correspond
to matrices where the sum of the coefficients
on each column is equal to~$2$.
Simple hypergraphs correspond to matrices
with~$\{0,1\}$ coefficients
that do not contain two identical columns.
Let us consider a hypergraph~$G$ and a matrix representation~$M$ of it.
A hypergraph~$H$ is said to be the \emph{dual} of~$G$
if the transpose~$\transp{M}$ of~$M$ represents it.
In other words, $H$ is obtained from~$G$
by reversing the roles of vertices and edges,
of degrees and sizes of the edges.
Therefore, the choice of weighting the edges depending of their size
can be transposed into weights on the vertices with respect to their degrees.
Figure~\ref{fig:dual} displays a dual of the hypergraph of figure~\ref{fig:example}.
This notion will be useful in the proof of Theorem~\ref{th:kernel}.

\begin{figure}
\begin{center}
\includegraphics[scale = 0.4]{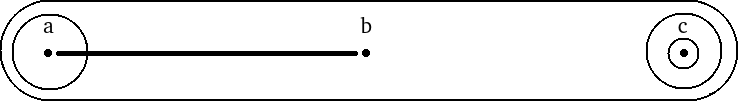}
\end{center}
\caption{One of the duals of the hypergraph of figure~\ref{fig:example}. 
The vertex~$c$, of degree~$3$, corresponds to the edge~$(3,4,5)$, of size~$3$, in figure~$\ref{fig:example}$.}
\label{fig:dual}
\end{figure}

Comparing~\eqref{eq:gf} with~\eqref{eq:simplegf},
simple hypergraphs may appear more natural
than hypergraphs.
But their generating function is more intricate,
their matrix representations satisfy more complex constraints
and the asymptotic results on hypergraphs can often 
be extended to simple hypergraphs.
Furthermore, experience has shown that multigraphs 
appear as often as simple graphs in applications.
This is why we do not confine our study to simple hypergraphs.

So far, we have adopted an enumerative approach of the model,
but there is a corresponding probabilistic description.
Let us define~$\HP_{n,k}$ (resp.~$\NMHP_{n,k}$) 
as the set of hypergraphs (resp. simple hypergraphs) 
with~$n$ vertices and excess~$k$,
equipped with the probability distribution induced by the weights~\eqref{eq:weight}.
Therefore, the hypergraph~$G$ occurs with probability~$\kappa(G)/\sum_{H \in \HP_{n,k}} \kappa(H)$.

    \section{Hypergraphs with~$n$ Vertices and Excess~$k$} \label{sec:hpnk}

In this section, we derive the asymptotic number of hypergraphs
and simple hypergraphs with~$n$ vertices and global excess~$k$.
This result is interesting by itself and is a first step
to find the excess~$k$ 
at which the first component with strictly positive excess
is likely to appear.
Statistics on the number of edges are also derived.

\begin{theorem} \label{th:hp}
  Let~$\lambda$ be a strictly positive real value and~$k = (\lambda - 1) n$,
  then the sum of the weights of the hypergraphs in~$\HP_{n,k}$ is
  \[
    \hp_{n,k} \sim 
      \frac{n^{n+k}}{\sqrt{2 \pi n}}
      \frac{e^{\frac{\weight(\zeta)}{\zeta} n}}{\zeta^{n+k}}
      \frac{1}{\sqrt{\zeta \weight''(\zeta) - \lambda}}
  \] 
  where~$\Psi(z)$ denotes the function~$\weight'(z) - \frac{\weight(z)}{z}$
  and~$\zeta$ is defined by~$\Psi(\zeta) = \lambda$.
  A similar result holds for simple hypergraphs:
  \[
    \nmhp_{n,k} \sim
      \frac{n^{n+k}}{\sqrt{2 \pi n}}
      \frac{e^{\frac{\weight(\zeta)}{\zeta} n}}{\zeta^{n+k}}
      \frac{\exp\left({-\frac{\omega_2^2 \zeta^2}{4} - \frac{\zeta \weight''(\zeta)}{2}}\right)}{\sqrt{\zeta \weight''(\zeta) - \lambda}}.
  \]
  More precisely, if~$k = (\lambda - 1) n + x n^{2/3}$
  where~$x$ is bounded,
  then the two previous asymptotics are multiplied by
  a factor~$\exp\left( \frac{-x^2}{2(\zeta \weight''(\zeta) - \lambda)} n^{1/3}  + \frac{x^3}{6} \frac{\zeta^2 \weight'''(\zeta) + \lambda}{(\zeta \weight''(\zeta) - \lambda)^3} \right)$.
\end{theorem}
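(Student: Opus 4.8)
The plan is to reduce $\hp_{n,k}$ to a single contour integral and then apply the saddle-point method. Starting from the generating function~\eqref{eq:hp} and using that the excess-$k$ subfamily has generating function $[y^k]\hp(z/y,w/y,xy)$, I would set $w=x=1$ to obtain
\[
  \hp_{n,k} = n![z^n][y^k]\,\hp(z/y,1/y,y) = [y^k]\left( y^{-n} e^{\weight(ny)/y} \right).
\]
Cauchy's formula followed by the change of variable $\zeta = ny$ then turns this into
\[
  \hp_{n,k} = \frac{n^{n+k}}{2\pi i}\oint \exp\left( n\left[\frac{\weight(\zeta)}{\zeta} - \lambda\ln\zeta\right]\right)\frac{d\zeta}{\zeta},
\]
using $n+k=\lambda n$. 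The contour is a circle $|\zeta|=\zeta_0$; since $\weight(\zeta)/\zeta$ has nonnegative Taylor coefficients, the modulus of the integrand is maximized on the positive real axis, and the aperiodicity hypothesis guarantees this maximum is strict and unique on the circle.

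Next I would run the saddle-point analysis with phase $h(\zeta)=\weight(\zeta)/\zeta-\lambda\ln\zeta$. Solving $h'(\zeta)=0$ gives exactly $\weight'(\zeta)-\weight(\zeta)/\zeta=\lambda$, that is $\Psi(\zeta)=\lambda$, which defines a unique saddle $\zeta>0$ because $\Psi$ has nonnegative Taylor coefficients and is therefore increasing on the positive reals. A direct computation, simplified with the saddle equation $\weight(\zeta)=\zeta\weight'(\zeta)-\lambda\zeta$, yields
\[
  h''(\zeta) = \frac{\zeta\weight''(\zeta)-\lambda}{\zeta^2},
\]
which is positive since $\zeta\weight''(\zeta)-\lambda=\zeta\Psi'(\zeta)>0$. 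The standard estimate $\frac{1}{2\pi i}\oint g(\zeta)e^{nh(\zeta)}\,d\zeta \sim g(\zeta)\, e^{nh(\zeta)}/\sqrt{2\pi n\, h''(\zeta)}$ with $g(\zeta)=1/\zeta$ then reproduces the claimed formula, after rewriting $e^{nh(\zeta)} = e^{n\weight(\zeta)/\zeta}\zeta^{-(n+k)}$. For simple hypergraphs I would start instead from~\eqref{eq:nmhp} and repeat the same contour/saddle argument; the only difference is the expansion of $\sum_t\binom{n}{t}\ln(1+\omega_t x^t w)$. Writing $\binom{n}{t}=\frac{n^t}{t!}(1-\binom{t}{2}/n+\cdots)$ and $\ln(1+u)=u-u^2/2+\cdots$, the leading term again gives $n\weight(\zeta)/\zeta$; the quadratic term of the logarithm contributes only through $t=2$, producing $-\omega_2^2\zeta^2/4$, and the $\binom{t}{2}$ correction contributes $-\tfrac12\sum_t \frac{t(t-1)}{t!}\omega_t\zeta^{t-1}=-\tfrac12\zeta\weight''(\zeta)$, while all remaining contributions vanish as $n\to\infty$. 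This yields exactly the factor $\exp(-\omega_2^2\zeta^2/4-\zeta\weight''(\zeta)/2)$.

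For the refined statement with $k=(\lambda-1)n+xn^{2/3}$, I would set $\mu=(n+k)/n=\lambda+xn^{-1/3}$ and view the exponent as $n\,g(\mu)$, where $g(\mu)=\weight(\zeta_\mu)/\zeta_\mu-\mu\ln\zeta_\mu$ is evaluated at the saddle $\Psi(\zeta_\mu)=\mu$. By the envelope theorem $g'(\mu)=-\ln\zeta_\mu$ and $g''(\mu)=-1/(\zeta_\mu\Psi'(\zeta_\mu))$, and differentiating once more and simplifying at $\mu=\lambda$ gives $g'''(\lambda)=(\zeta^2\weight'''(\zeta)+\lambda)/(\zeta\weight''(\zeta)-\lambda)^3$. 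Taylor-expanding $n\,g(\mu)$ in $\mu-\lambda=xn^{-1/3}$ to third order, the first-order term $-xn^{2/3}\ln\zeta$ is precisely the contribution already carried by the $\zeta^{-(n+k)}$ factor of the base asymptotics, whereas the quadratic and cubic terms give $\tfrac12 g''(\lambda)x^2 n^{1/3}$ and $\tfrac16 g'''(\lambda)x^3$, matching the stated factor; the variation of the prefactor over such a displacement is only $1+O(n^{-1/3})$ and is negligible.

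The main obstacle throughout is the rigorous justification of the saddle-point estimate, rather than the algebra: one must bound the integrand uniformly away from $\zeta_0$ on the circle and confirm that no competing saddle of equal modulus contributes. This is exactly where the aperiodicity assumption, namely that $\weight(z)/z$ is not of the form $f(z^d)$, plays its role, and it is also the ingredient that must be tracked to guarantee the error terms are uniform enough to support the $n^{2/3}$-window refinement.
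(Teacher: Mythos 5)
Your proposal is correct and takes essentially the same route as the paper: the same reduction to $n^{n+k}[y^{n+k}]e^{n\Omega(y)/y}$, a saddle-point computation that is precisely the Large Powers Scheme the paper invokes (with the same saddle equation $\Psi(\zeta)=\lambda$ and variance $\zeta\Omega''(\zeta)-\lambda$), the same expansion of $\sum_t\binom{n}{t}\log(1+\omega_t(y/n)^{t-1})$ for simple hypergraphs, and the same shift-then-Taylor-expand treatment of the $n^{2/3}$ window, which you carry out via the envelope theorem where the paper resorts to Maple. Incidentally, your explicit computation of $g'''(\lambda)$ confirms the factor $\zeta^2\Omega'''(\zeta)+\lambda$ appearing in the theorem statement, whereas the paper's proof text writes $\zeta\Omega'''(\zeta)+\lambda$, evidently a typo.
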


\begin{proof}
  With the convention~\eqref{eq:gf},
  the sum of the weights of the hypergraphs 
  with~$n$ vertices and excess~$k$ is
  \begin{align*}
    n! [z^n y^k] \hp(z/y, 1/y, y)
    = n! [z^n y^k] \sum_n e^{\frac{\weight(n y)}{y}} \frac{(z/y)^n}{n!}
    = n^{n+k} [y^{n+k}] e^{\frac{\weight(y)}{y} n}.
  \end{align*}
  The asymptotics is then extracted using the Large Powers Scheme
  presented in~\cite[Chapter VIII]{FS09}.
  Observe that~$\Psi(z) = \sum_{t} \omega_t (t-1) \frac{z^{t-1}}{t!}$
  has nonnegative coefficients, so there is a unique solution of~$\Psi(\zeta) = \lambda$,
  and that~$\Psi(\zeta) = \lambda$
  implies~$\zeta \weight''(\zeta) - \lambda > 0$.

  For simple hypergraphs, 
  the coefficient we want to extract from~\eqref{eq:nmhp} is now
  \[
    [y^{n+k}] \prod_t (1 + \omega_t y^{t-1})^{\binom{n}{t}} 
  = 
    \frac{n^{n+k}}{2i\pi} \oint 
      \exp {\textstyle \left(\sum_t \binom{n}{t} \log\left(1 + \omega_t \left(\frac{y}{n}\right)^{t-1}\right) \right)}
      \frac{dy}{y^{n+k+1}}.
  \]
  The sum in the exponential can be rewritten
  \[
    \frac{\weight(y)}{y} n + 
    \sum_t \binom{n}{t} 
      \left( \log (1 + \omega_t \left(\frac{y}{n}\right)^{t-1} ) - \omega_t \left(\frac{y}{n}\right)^{t-1} \right)
      - \left( \frac{n^t}{t!} - \binom{n}{t} \right) \omega_t \left(\frac{y}{n}\right)^{t-1}
  \]
  which is~$\frac{\weight(y)}{y} n -\frac{\omega_2^2 y^2}{4} - \frac{y \weight''(y)}{2} + \bigO(1/n)$ 
  when~$y$ is bounded (we use here the hypothesis that~$\omega_0 = \omega_1 = 0$).
  In the saddle point method, $y$ is close to~$\zeta$, 
  which in our case is fixed with respect to~$n$.
  Therefore, 
  \[
    n! [z^n y^k] \nmhp \left(\frac{z}{y}, \frac{1}{y}, y \right)
    \sim
    \exp\left({-\frac{\omega_2^2 \zeta^2}{4} - \frac{\zeta \weight''(\zeta)}{2}}\right) \hp_{n,k}.
  \]

  The constraint~$k = (\lambda - 1) n + x n^{2/3}$ is equivalent
  to~$k = (\bar{\lambda} - 1) n$ 
  with~$\bar{\lambda} = \lambda + x n^{-1/3}$.
  Since~$x$ is bounded, so is~$\bar{\lambda}$
  and the first part of the theorem can be applied.
  Let us consider the solution~$\bar{\zeta}$
  of~$\Psi(\bar{\zeta}) = \bar{\lambda}$.
  With the help of maple, we find
  \[
    \frac{e^{n \frac{\weight(\bar{\zeta})}{\bar{\zeta}}}}{\bar{\zeta}^{n+k}}
    =
    \frac{e^{n \frac{\weight(\zeta)}{\zeta}}}{\zeta^{n+k}}    
    \exp \left(
      - \frac{x^2 n^{1/3}}{2 (\zeta \weight''(\zeta) - \lambda)}
      + \frac{x^3}{6} 
        \frac{\zeta \weight'''(\zeta) + \lambda}{(\zeta \weight''(\zeta) - \lambda)^3}
      + \bigO(n^{-1/3})
    \right).
  \]
\end{proof}

The factor~$\exp\big({-\frac{\omega_2^2 \zeta^2}{4} - \frac{\zeta \weight''(\zeta)}{2}}\big)$ 
is the asymptotic probability for a hypergraph in~$\HP_{n,k}$ to be simple.
For graphs, with~$\weight(z) = z^2/2$ and~$\lambda = 1/2$,
we obtain the same factor~$e^{-3/4}$ as in~\cite{JKLP93}.

We study the evolution of hypergraphs as their excess increases.
This choice of parameter may seem less natural than the number of edges,
but the excess turns out to be a better measure of the complexity of a hypergraph
than its number of edges.
Indeed, it seems natural to assume that a large edge carries 
more information than an edge of size $2$.
Furthermore, we can compute statistics on the number of edges
of hypergraphs with~$n$ edges and excess~$k$.

\begin{theorem}
  Let~$\Psi(z)$ and~$\zeta$ be defined as in Theorem~\ref{th:hp},
  and~$G$ be a random hypergraph in~$\HP_{n,k}$ or in~$\NMHP_{n,k}$
  with~$k = (\lambda - 1) n$,
  then the number~$m$ of edges of~$G$ admits a limit law
  that is Gaussian with parameters
  \begin{align*}
    \mathds{E}_{n,k}(m) &= \frac{\weight(\zeta)}{\zeta} n, \\
    \mathds{V}_{n,k}(m) &= \left( \frac{\weight(\zeta)}{\zeta} - \frac{\lambda^2}{\zeta \weight''(\zeta) - \lambda} \right) n.
  \end{align*}
  Reversely, the expectation and variance of the excess~$k$
  of a random hypergraph with~$n$ vertices and~$m$ edges are
  \begin{align*}
    \mathds{E}_{n,m}(k) &= n m \frac{\weight'(n)}{\weight(n)} - n - m,\\
    \mathds{V}_{n,m}(k) &= \frac{n m}{\weight(n)} \left( n \weight''(n) - n \frac{\weight'(n)^2}{\weight(n)} + \weight'(n) \right).
  \end{align*}
\end{theorem}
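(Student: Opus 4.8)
The two statements are of different nature: the first is a Gaussian limit law extracted from the bivariate generating function through the quasi-powers framework, while the second is an exact moment computation resting on an independence structure. For the first part, I would reintroduce the variable~$w$ marking edges and carry it through the substitution used in the proof of Theorem~\ref{th:hp}. Writing~$\hp_{n,k}(w)=\sum_{G}\kappa(G)\,w^{m(G)}$ for the weighted count refined by the number of edges, the same manipulation that produced~$\hp_{n,k}=n^{n+k}[u^{n+k}]e^{n\weight(u)/u}$ now gives
\begin{equation*}
  \hp_{n,k}(w)=n^{n+k}\,[u^{\lambda n}]\,e^{w\,n\,\weight(u)/u},
\end{equation*}
so that the probability generating function of~$m$ is the ratio~$p_{n,k}(w)=[u^{\lambda n}]B(u,w)^{n}\big/[u^{\lambda n}]B(u,1)^{n}$ with~$B(u,w)=e^{w\weight(u)/u}$.

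The plan is then to apply the Large Powers Scheme of~\cite[Ch.~VIII]{FS09} uniformly for~$w$ in a complex neighborhood of~$1$. The saddle point~$\zeta_w$ solving~$u\,\partial_u\log B(u,w)=\lambda$, that is~$w\Psi(\zeta_w)=\lambda$, depends analytically on~$w$ and reduces to~$\zeta$ at~$w=1$. This puts~$p_{n,k}$ in quasi-powers form, $p_{n,k}(e^{s})=\exp\big(n\,(L(e^{s})-L(1))+\bigO(\log n)\big)$ with~$L(w)=w\,\weight(\zeta_w)/\zeta_w-\lambda\log\zeta_w$ analytic near~$w=1$, and Hwang's quasi-powers theorem (in~\cite{FS09}) yields a Gaussian limit for~$m$ with mean~$\sim n\,L'(1)$ and variance~$\sim n\,(L'(1)+L''(1))$. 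The parameters follow from the envelope property of the saddle: since~$\partial_\zeta\big(w\weight(\zeta)/\zeta-\lambda\log\zeta\big)=(w\Psi(\zeta)-\lambda)/\zeta$ vanishes at~$\zeta=\zeta_w$, one gets~$L'(1)=\weight(\zeta)/\zeta$; differentiating~$\Psi(\zeta_w)=\lambda/w$ and using~$\zeta\Psi'(\zeta)=\zeta\weight''(\zeta)-\lambda$ gives~$L''(1)=-\lambda^{2}/(\zeta\weight''(\zeta)-\lambda)$, which reproduces the announced mean and variance. Non-degeneracy is exactly where the hypotheses enter: introducing the edge-size law~$\mathds{P}(T=t)\propto\omega_t\zeta^{t}/t!$, a short computation rewrites the variance coefficient as~$\tfrac{\weight(\zeta)}{\zeta}\big(1-(\mathds{E}[T-1])^{2}/\mathds{E}[(T-1)^{2}]\big)$, which is positive by Cauchy--Schwarz and strictly positive unless~$T$ is almost surely constant, i.e. the $d$-uniform case excluded by our assumptions. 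For simple hypergraphs the only change is the bounded correction factor found in Theorem~\ref{th:hp}, contributing~$\bigO(1)$ to~$\log\hp_{n,k}(w)$ and hence leaving the~$\bigO(n)$ mean and variance unchanged.

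The second part is exact and elementary. Extracting~$n$ vertices and~$m$ edges from~\eqref{eq:hp} gives~$n![z^n w^m]\hp(z,w,x)=\weight(nx)^{m}/m!$, so the generating function of the size~$l$, normalized at~$x=1$, is~$(\weight(nx)/\weight(n))^{m}$. This is an $m$-fold product, meaning the sizes of the~$m$ edges are independent and identically distributed with probability generating function~$g(x)=\weight(nx)/\weight(n)$, and~$l$ is their sum. Since~$k=l-n-m$, we have~$\mathds{E}_{n,m}(k)=m\,g'(1)-n-m$ and~$\mathds{V}_{n,m}(k)=m\,(g''(1)+g'(1)-g'(1)^{2})$; substituting~$g'(1)=n\weight'(n)/\weight(n)$ and~$g''(1)=n^{2}\weight''(n)/\weight(n)$ yields the stated formulas.

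The main obstacle lies entirely in the first part: establishing the saddle-point estimate uniformly in the complex parameter~$w$ and verifying the quasi-powers hypotheses, namely the analyticity of~$\zeta_w$ and the non-degeneracy of the limiting variance, the latter being precisely where the non-uniformity assumption on~$\weight$ is used. Once this is secured, the explicit parameters and the whole second part reduce to routine differentiation.
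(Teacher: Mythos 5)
Your proposal is correct and follows essentially the same route as the paper: for the Gaussian law, the paper also refines the count by a variable~$w$ marking edges, applies the Large Powers Theorem uniformly for~$w$ near~$1$ with the saddle point~$\zeta_w$ defined by~$w\Psi(\zeta_w)=\lambda$, and concludes via Hwang's quasi-powers lemma; for the second part, the paper performs exactly your computation, differentiating the probability generating function~$p_{n,m}(y)=\left(\weight(ny)/\weight(n)\right)^m y^{-(n+m)}$ at~$y=1$ (your i.i.d.\ edge-size reformulation is just a cleaner packaging of the same identity). One small inaccuracy in a side remark: non-degeneracy of the variance is \emph{not} guaranteed by the paper's aperiodicity assumption, since graphs ($\weight(z)=z^2/2$, i.e.\ $2$-uniform) satisfy that assumption yet give~$\mathds{V}_{n,k}(m)=0$; as the paper notes after the theorem, the limit is degenerate precisely in the uniform case.
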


\begin{proof}
  By extraction from Equation~\eqref{eq:hp}, the generating functions of 
  the hypergraphs in $\HP_{n,k}$, in $\NMHP_{n,k}$
  and the generating function of hypergraphs with $n$ vertices and $m$ edges are
  \begin{align*}
    \hp_{n,k}(w) &= n^{n+k} [y^{n+k}] e^{w \frac{\weight(y)}{y} n},\\
    \nmhp_{n,k}(w) &= n^{n+k} [y^{n+k}] e^{w \frac{\weight(y)}{y} n} 
      e^{- \frac{y \weight''(y)}{2} w - \frac{\omega_2^2 y^2}{4} w^2 + \bigO(1/n)},\\
    \hp_{n,m}(y) &= \frac{\weight(n y)^m}{y^{n+m} m!},
  \end{align*}
  where~$w$ and~$y$ mark respectively the number of edges and the excess.
  The probability generating function corresponding to
  the distribution of the number of edges~$m$ in $\HP_{n,k}$, in $\NMHP_{n,k}$
  and to the distribution of the excess~$k$ in hypergraphs with $n$ vertices and $m$ edges
  are
  \begin{align*}
    p_{n,k}(w) &= \frac{\hp_{n,k}(w)}{\hp_{n,k}(1)} = \frac{[y^{n+k}] e^{w \frac{\Omega(y)}{y} n}}{[y^{n+k}] e^{\frac{\Omega(y)}{y} n}},
    \\
    \mathit{sp}_{n,k}(w) &= \frac{\nmhp_{n,k}(w)}{\nmhp_{n,k}(1)} = \frac{[y^{n+k}] e^{w \frac{\weight(y)}{y} n} 
          e^{- \frac{y \weight''(y)}{2} w - \frac{\omega_2^2 y^2}{4} w^2 + \bigO(1/n)}}{[y^{n+k}] e^{\frac{\weight(y)}{y} n} 
                e^{- \frac{y \weight''(y)}{2}- \frac{\omega_2^2 y^2}{4} + \bigO(1/n)}},
    \\
    p_{n,m}(y) &= \frac{\hp_{n,m}(y)}{\hp_{n,m}(1)} = \left( \frac{\Omega(ny)}{\Omega(n)} \right)^{m} \frac{1}{y^{n+m}}.
  \end{align*}

  The expected excess in a random hypergraphs with $n$ vertices and $m$ edges is $\mathds{E}_{n,m}(k) = p'_{n,m}(1)$,
  and its variance is $\mathds{V}_{n,m}(k) = p''_{n,m}(1) + p'_{n,m}(1) - p'_{n,m}(1)$.
  We therefore compute the first derivative of this probability generating function
  \[
    p'_{n,m}(y) =
    \left( n m \frac{ \Omega'(n y) }{ \Omega(n y) } - \frac{n+m}{y} \right)
    p_{n,m}(y),
  \]
  and thus the second derivative evaluated at $1$ is equal to
  \[
    p''_{n,m}(1) =
    n^2 m \left( \frac{\Omega''(n)}{\Omega(n)} - \left( \frac{\Omega'(n)}{\Omega(n)} \right)^2 \right) + n + m
    + \left( n m \frac{\Omega'(n)}{\Omega(n)} - \frac{n+m}{y} \right)^2.
  \]
  The values of the mean and variance of the excess follow.

  We now turn to the computation of the limit law of the number of edges $m$ 
  in a random hypergraph of $\HP_{n,k}$.
  First, using the \emph{Large Powers Theorem} \cite[Theorem VIII.8]{FS09}
  we obtain
  \[
    [y^{n+k}] e^{w \frac{\Omega(y)}{y} n} = 
    \frac{e^{w \frac{\Omega(\zeta_w)}{\zeta_w} n}}{
      \zeta_w^{n+k} 
      \sqrt{ 2 \pi n \left( \zeta_w \Omega''(\zeta_w) - \frac{\lambda}{w} \right)}
    }
    \left( 1 + \bigO(n^{-1/2}) \right)
  \]
  uniformly for $w$ in a neighborhood of $1$, where $\zeta_w$ is characterized by the relation
  \[
    w \Psi(\zeta_w) = \lambda.
  \]
  Uniformly for $s$ in a neighborhood of $0$, we find for the Laplace transform of the number of edges
  $p_{n,k}(e^s) = \mathds{E}_{n,k}(e^{s\, m})$ 
  \[
    p_{n,k}(e^s)
    =
    e^{n A(s) + B(s)}
    \left( 1 + \bigO \left( n^{-1/2} \right) \right)
  \]
  where
  \[
    A(s) = 
    \frac{\weight(\zeta)}{\zeta} s
    + \left(
      \frac{\weight(\zeta)}{\zeta}
      - \frac{\lambda^2}{\zeta \weight''(\zeta) - \lambda}
    \right) \frac{s^2}{2}
    + \bigO(s^3).
  \]
  To prove the normal limit distribution of the number of edges~$m$
  in~$\HP_{n,k}$, we then apply a lemma of Hwang~\cite{H98}
  that can also be found in~\cite[Lemma IX.1]{FS09}.
  The means is then $n A'(0)$ and the variance $n A''(0)$.
  The same result holds for simple hypergraphs.
\end{proof}

The variance $\mathds{V}_{n,k}(m)$ of the number of edges
in a random hypergraph of $\HP_{n,k}$ is zero only
for uniform hypergraphs.
Indeed, the number of edges is then characterized
by the number of vertices and the excess,
and the corresponding random variable is degenerated only in that case.

    \section{Subcritical Hypergraphs} \label{sec:forest}

We follow the conventions established by Berge~\cite{B85}:
a \emph{walk} of a hypergraph~$G$ 
is a sequence~$v_0, e_1, v_1, \ldots, v_{t-1}, e_t, v_t$
where for all~$i$, $v_i \in V(G)$, $e_i \in E(G)$ and~$\{v_{i-1}, v_i\} \subset e_i$.
A \emph{path} is a walk in which all~$v_i$ and~$e_i$ are distinct.
A walk is a cycle if all~$v_i$ and $e_i$ are distinct, except~$v_0 = v_t$.
Various examples of cycles are presented in Figure~\ref{fig:cycles}.
\begin{figure}
\begin{center}
\includegraphics[scale = 0.5]{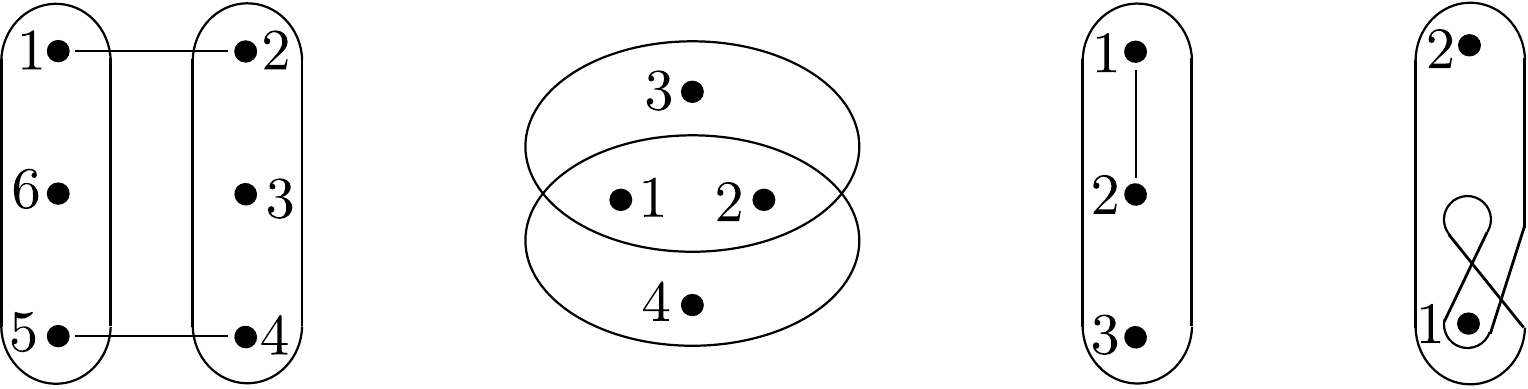}
\end{center}
\label{fig:cycles}
\caption{Each of those hypergraphs contains a cycle, respectively $(1,2,3,4,5,6)$, $(1,2)$, $(1,2)$ and $(1)$.
Observe that the last hypergraph is not simple.}
\end{figure}
Connectivity, trees and rooted trees are then defined in the usual way.

The generating function of edges is $\Omega(z)$.
We say that we replace a vertex with a \emph{hole}
when this vertex does not count in the size of the edge anymore.
The generating function of edges with one vertex replaced by a hole
is $\Omega'(z)$, because there are $t$ possible labels for the vertex removed
in an edge of size~$t$.
We can mark a vertex in an edge, 
and the corresponding generating function is $z \Omega'(z)$.
Holes allow us to clip edges together.
For example, two edges with one common vertex can be described
as an edge with a vertex replaced by a hole and an edge with a vertex marked.
The generating function of those hypergraphs is then $z \Omega'(z)^2/2$
(we divide by $2$ because the two edges have symmetrical roles).

A \emph{unicycle component} is a connected hypergraph 
that contains exactly one cycle.
We also define a \emph{path of trees} as 
a path with both ends replaced by holes and that contains no cycle,
plus a rooted tree hooked to each vertex
(except to the two ends of the path). 
It can equivalently be defined
as an unrooted tree with two distinct leaves replaced with holes.
The notion of path of trees is illustrated in Figure~\ref{fig:pathoftrees}.
\begin{figure}
\begin{center}
\includegraphics[scale = 0.5]{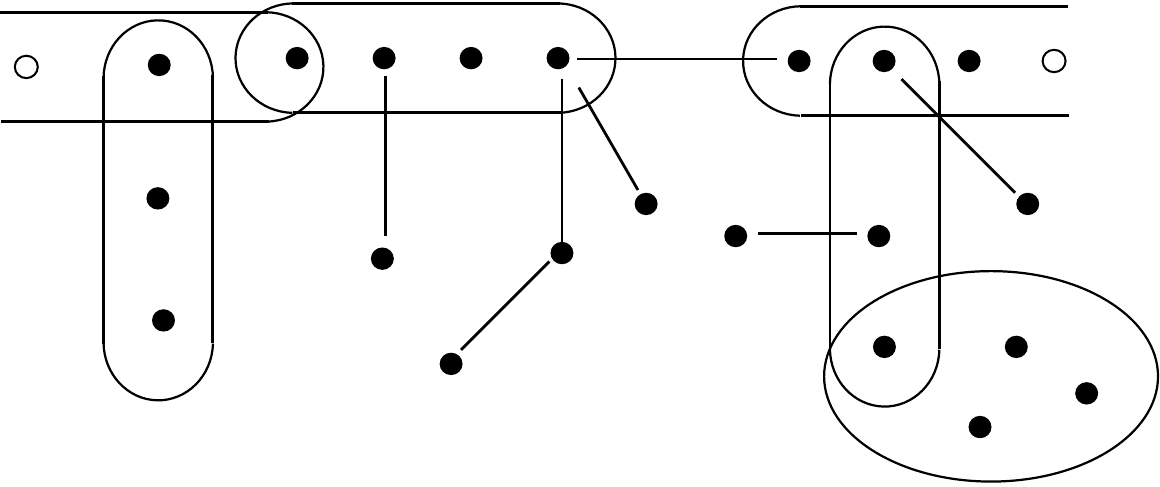}
\end{center}
\label{fig:pathoftrees}
\caption{A path of trees. The two white nodes represent holes. The labels have been omitted.}
\end{figure}

\begin{lemma} \label{th:TUV}
  Let~$T(z)$, $U(z)$, $V(z)$ and~$P(z)$ denote the generating functions 
  of rooted trees, unrooted trees, unicycle components 
  and paths of trees, using the variable~$z$
  to mark the number of vertices, then
  \begin{align}
    T(z) &= z e^{\weight'(T(z))}, \label{eq:T}\\
    U(z) &= T(z) + \weight(T(z)) - T(z) \weight'(T(z)), \label{eq:U}\\
    V(z) &= \frac{1}{2} \log \frac{1}{1 - T(z) \weight''(T(z))}, \label{eq:V}\\
    P(z) &= \frac{\weight''(T(z))}{1 - T(z) \weight''(T(z))}. \label{eq:P}
  \end{align}
\end{lemma}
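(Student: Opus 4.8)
The plan is to obtain all four functions by direct symbolic decompositions, reusing the dictionary established just above: an edge with one slot turned into a hole and its other vertices carrying rooted trees is encoded by $\weight'(T(z))$, and an edge with two holes by $\weight''(T(z))$. First I would treat $T$: a rooted tree is its root vertex, marked by $z$, together with the (exponential) set of edges incident to the root, where in each incident edge the root occupies one slot, seen as a hole, while the remaining vertices are roots of disjoint subtrees. An incident edge is thus $\weight'(T(z))$, a set of them is $e^{\weight'(T(z))}$, and multiplying by the root yields \eqref{eq:T}. The use of a set (rather than a multiset) of incident edges is precisely what the compensation factor $\nl(G)/m(G)!$ of \eqref{eq:weight} is designed to make consistent.

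For $U$ I would use that a rooted tree is an unrooted tree with one distinguished vertex, so that pointing gives $T(z) = z U'(z)$. It then suffices to verify that the stated expression satisfies this identity: differentiating \eqref{eq:U} gives $U' = T'(1 - T\weight''(T))$, while logarithmic differentiation of \eqref{eq:T} gives $z T'(1 - T\weight''(T)) = T$, so that $z U' = T$; since both series vanish at $z=0$ this pins down $U$ uniquely. Conceptually, the three terms of \eqref{eq:U} come from the dissymmetry theorem applied to the bipartite incidence tree of the hypertree: rooting at a vertex, at an edge, and at an incidence contributes $T$, $\weight(T)$ and $T\weight'(T)$ respectively.

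For $V$ I would describe a unicycle component as a cycle of rooted trees. Cutting away the hanging trees exhibits it as an undirected necklace of $t \geq 1$ links, a link being a cycle vertex bearing a rooted tree ($T$) together with a cycle edge bearing two holes, into which its two neighbouring cycle vertices are clipped, and rooted subtrees on its other slots ($\weight''(T)$). A link is therefore $T\weight''(T)$, an undirected cycle of $t$ of them contributes $\tfrac{1}{2t}(T\weight''(T))^t$ (with the short cases $t\le 2$ discussed below), and summing over $t$ gives $\tfrac12 \sum_{t\geq1} (T\weight''(T))^t/t = \tfrac12\log\frac{1}{1 - T\weight''(T)}$, which is \eqref{eq:V}. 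Opening the necklace into an ordered chain with holes at its two ends then yields $P$ as a geometric series: a first edge with two holes ($\weight''(T)$) followed by a sequence of vertex--edge links ($T\weight''(T)$), summing to $\weight''(T)/(1 - T\weight''(T))$, i.e. \eqref{eq:P}.

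The main obstacle I expect is the symmetry bookkeeping in $V$ for the degenerate short cycles. A cycle of length one is an edge with a repeated vertex (a loop) and a cycle of length two is a pair of parallel edges; for these the naive necklace factor $\tfrac{1}{2t}$ does not equal the number of labelled realizations, and the discrepancy must be absorbed exactly by the compensation factor $\nl(G)/m(G)!$ of \eqref{eq:weight}, which should reproduce weight $\tfrac12$ for a loop and $\tfrac14$ for a doubled edge. Checking this --- the hypergraph analogue of the compensation computation in \cite{JKLP93} --- is the delicate step; it is what makes the uniform factor $\tfrac{1}{2t}$ valid for every $t\geq1$ and explains the absence of any correction terms in \eqref{eq:V} and \eqref{eq:P}.
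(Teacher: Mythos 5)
Your proposal is correct and follows essentially the same route as the paper: the same symbolic decomposition of a rooted tree for \eqref{eq:T}, the pointing identity $z\partial_z U = T$ for \eqref{eq:U} (which the paper also invokes, alongside the dissymmetry theorem), cycles of rooted trees for \eqref{eq:V}, and the sequence decomposition for \eqref{eq:P}. One small bookkeeping remark on your final paragraph: the compensation factor \eqref{eq:weight} of a doubled edge is $1/2$ (not $1/4$), and the necklace value $\tfrac{1}{2t}\big(T\weight''(T)\big)^t\big|_{t=2}$ is recovered as that $1/2$ times the $1/2$ coming from the unordered pair of the two cycle trees --- which confirms, rather than changes, your point that the weight \eqref{eq:weight} is exactly what makes the uniform factor $\tfrac{1}{2t}$ valid for all $t \geq 1$.
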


\begin{proof}
  Those expressions can be derived using 
  the \emph{symbolic method} presented in~\cite{FS09}.
  The generating function of edges is~$\weight(z)$.
  If one vertex is marked, it becomes~$z \weight'(z)$
  and~$z \weight''(z)$ if another vertex is replaced by a hole.
  Equation~\eqref{eq:T} means that a rooted tree is 
  a vertex (the root) and a set of edges from which 
  a vertex has been replaced by a hole and the other vertices replaced by rooted trees.
  Equation~\eqref{eq:U} is a classical consequence of the dissymmetry theorem 
  described in~\cite{BLL97}. 
  Hypertrees have been studied using a combinatorial species approach by Oger in~\cite{O13}.
  It can be checked that~$z \partial_z U = T$,
  which, in a symbolic method, means that
  a tree with a vertex marked is a rooted tree.
  Unicycle components are cycles of rooted trees, which implies~\eqref{eq:V}.
\end{proof}

Combining the enumeration of hypergraphs
with the enumeration of forests,
we can investigate the birth of the first cycle
and the limit distribution of the number of cycles
in a hypergraph with small excess.

\begin{theorem} \label{th:forest}
  Let~$\Psi(z)$ denote the function~$\weight'(z) - \frac{\weight(z)}{z}$,
  $\tau$ be implicitly defined by~$\tau \weight''(\tau) = 1$
  and~$\crit = \Psi(\tau)$.
  Let us consider an excess~$k = (\lambda - 1) n$ where $0 < \lambda < \crit$
  and the value~$\zeta$ such that~$\Psi(\zeta) = \lambda$.
  With high probability, a hypergraph in~$\HP_{n,k}$ or~$\NMHP_{n,k}$
  contains no component with two cycles.
  The limit distribution of the number of cycles 
  of such a hypergraph follows a Poisson law of parameter 
  \[
    \frac{1}{2} \log \left( \frac{1}{1 - \zeta \weight''(\zeta)} \right)
  \]
  if the hypergraph is in~$\HP_{n,k}$, and
  \[
    \frac{1}{2} \log \left( \frac{1}{1 - \zeta \weight''(\zeta)} \right)
    - \frac{\omega_2^2 \zeta^2}{4}
    - \frac{\zeta \weight''(\zeta)}{2}
  \]
  if it is in~$\NMHP_{n,k}$.
\end{theorem}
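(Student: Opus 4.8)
The plan is to enumerate the hypergraphs having no complex component through their decomposition into tree components (excess~$-1$) and unicycle components (excess~$0$), using the series $U$ and $V$ of Lemma~\ref{th:TUV}, and to compare the outcome with $\hp_{n,k}$ from Theorem~\ref{th:hp}. Excess is additive over connected components, so a hypergraph of excess $k$ with no complex component has exactly $-k=(1-\lambda)n$ tree components (one checks $\crit<1$, so this is positive), whereas its unicycle components are unconstrained. Marking vertices by $z$, excess by $y$ and each cycle by $u$, the set construction gives the exponential generating function $\exp\!\big(U(z)/y+u\,V(z)\big)$; extracting $[y^k]$ amounts to fixing $-k$ trees, so the weighted number of such hypergraphs with $n$ vertices, excess $k$ and $j$ cycles is $n![z^n]\frac{U(z)^{-k}}{(-k)!}\frac{V(z)^j}{j!}$. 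Consequently the probability generating function of the number of cycles is
\[
  p_n(u)=\frac{[z^n]\,U(z)^{-k}e^{u V(z)}}{[z^n]\,U(z)^{-k}e^{V(z)}}.
\]

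I would then apply the Large Powers scheme to $[z^n]U(z)^{-k}e^{u V(z)}$, with $U(z)^{-k}=U(z)^{(1-\lambda)n}$ as the large power and $e^{u V(z)}$ as a subexponential factor that does not move the saddle to leading order. The saddle $\rho$ solves $(1-\lambda)\,\rho\,U'(\rho)/U(\rho)=1$; using $zU'=T$ and $U=T+\weight(T)-T\weight'(T)$ this collapses to $\Psi(T(\rho))=\lambda$, that is $T(\rho)=\zeta$, whence $V(\rho)=\tfrac12\log\frac{1}{1-\zeta\weight''(\zeta)}$. Subcriticality $\lambda<\crit$ gives $\zeta<\tau$, so $\zeta\weight''(\zeta)<1$ and $\rho$ lies strictly inside the common disk of convergence of $U$ and $V$ (whose boundary is the point where $T\weight''(T)=1$). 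Since numerator and denominator of $p_n(u)$ share this saddle, $p_n(u)\to e^{u V(\rho)}/e^{V(\rho)}=e^{(u-1)V(\rho)}$, the generating function of a Poisson law of parameter $V(\rho)$, and the continuity theorem for probability generating functions yields the stated limit for $\HP_{n,k}$. The same saddle applied with $u=1$ gives the total no-complex count $F=n![z^n]\frac{U^{-k}}{(-k)!}e^{V}$; matching its full asymptotics (via Stirling on $(-k)!$) against $\hp_{n,k}$ shows $F\sim\hp_{n,k}$, and as $F\le\hp_{n,k}$ this forces the hypergraphs carrying a complex component to have weight $o(\hp_{n,k})$, which is the high-probability statement.

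For the simple model, trees are automatically simple because any repeated vertex or repeated edge already creates a cycle; hence $U$ is unchanged and only the unicycle series needs correcting. Inside a unicycle component the sole forbidden structures are loops, which are exactly the length-$1$ cycles contributing the term $\tfrac12 T\weight''(T)$ (the $j=1$ term in $V=\tfrac12\sum_{j\ge1}(T\weight''(T))^j/j$), and double edges, i.e.\ two identical size-$2$ edges, which form the size-$2$ part of the $j=2$ term and contribute $\tfrac14\omega_2^2 T^2$; no other degenerate cycle can occur, since two identical edges of size $\ge3$ already force excess $\ge1$. Replacing $V$ by $V_{\mathrm{simple}}=V-\tfrac12 T\weight''(T)-\tfrac14\omega_2^2 T^2$ and repeating the argument gives a Poisson law of parameter $V_{\mathrm{simple}}(\rho)=\tfrac12\log\frac{1}{1-\zeta\weight''(\zeta)}-\tfrac{\zeta\weight''(\zeta)}{2}-\tfrac{\omega_2^2\zeta^2}{4}$, as claimed; as a consistency check, $e^{V_{\mathrm{simple}}(\rho)-V(\rho)}$ reproduces precisely the simplicity factor $\exp\!\big(-\tfrac{\omega_2^2\zeta^2}{4}-\tfrac{\zeta\weight''(\zeta)}{2}\big)$ relating $\nmhp_{n,k}$ to $\hp_{n,k}$ in Theorem~\ref{th:hp}.

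I expect the main effort to lie in the saddle point estimates rather than in the bookkeeping: establishing admissibility of $\rho$ and negligibility of the tails, and---most delicately---carrying the polynomial and constant prefactors precisely enough to obtain $F\sim\hp_{n,k}$ (hence the absence of complex components with high probability) instead of mere exponential-order agreement. Verifying that the $u$-dependence does not shift the saddle beyond $O(1/n)$, so that the ratio $p_n(u)$ genuinely tends to $e^{(u-1)V(\rho)}$ uniformly on a neighborhood of $u=1$, is the other point requiring care.
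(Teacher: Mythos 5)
Your proposal is correct and follows essentially the same route as the paper: the same decomposition of a complex-free hypergraph of excess $k$ into exactly $-k$ trees plus unicyclic components, the same generating function $n!\,[z^n]\tfrac{U(z)^{-k}}{(-k)!}e^{uV(z)}$ with $u$ marking cycles, a Large Powers saddle-point analysis whose saddle satisfies $\Psi(T(\rho))=\lambda$ (valid since $\zeta<\tau$), comparison of the $u=1$ count against Theorem~\ref{th:hp}, and the identical substitution $V \mapsto V-\tfrac{T\Omega''(T)}{2}-\tfrac{\omega_2^2T^2}{4}$ for simple hypergraphs. The only cosmetic differences are that the paper first performs the change of variable $t=T(z)$ in the Cauchy integral and then identifies $\zeta$ as the dominant of the two saddle-point roots, whereas you apply the scheme directly in the $z$-variable, and that the paper normalizes by $\hp_{n,k}$ in one step rather than splitting the argument into a conditional Poisson limit plus the asymptotic matching $F\sim\hp_{n,k}$.
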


\begin{proof}
  Let~$\SHP_{n,k}$ 
  denote the set of hypergraphs in~$\HP_{n,k}$
  that contains only trees and unicycle components.
  The excess of a tree is~$-1$, 
  the excess of a unicycle component is~$0$.
  Since the excess of a hypergraph is the sum of the excesses of its components, 
  each hypergraph in~$\SHP_{n,k}$ contains exactly~$-k$ trees.
  Let $F_{n,k}(u)$ denote the generating function of the number of cycles
  in hypergraphs of~$\SHP_{n,k}$, then
  \[
    F_{n,k}(u) = n! [z^n] \frac{U(z)^{-k}}{(-k)!} e^{u V(z)} 
  \]
  where~$u$ marks the cycles.
  In the Cauchy integral representation of the coefficient extraction in $F_{n,k}(u)$,
  we apply the change of variable $t = T(z)$ and obtain
  \[
    F_{n,k}(u) =
    \frac{n!}{(-k)!} \frac{1}{2i\pi} 
    \oint 
    \frac{t^{-k} (1- \Psi(t))^{-k} e^{\Omega'(t) n}}{(1 - t \Omega''(t))^{(u-1)/2}}
    \frac{d t}{t^{n+1}}.
  \]
  Since $k = (\lambda - 1) n$, this can be rewritten as a coefficient extraction
  \[
    F_{n,k}(u) = \frac{n!}{(-k)!} [t^{n+k}] 
    \frac{ \left( (1-\Psi(t))^{1-\lambda} e^{\Omega'(t)} \right)^n }{(1 - t \Omega''(t))^{(u-1)/2}}.
  \]
  We use the Large Powers Theorem~VIII.8 of~\cite{FS09}
  to extract the asymptotic.
  The saddle-point equation is
  \[
    t \frac{d}{d t} \log \left( (1-\Psi(t))^{1-\lambda} e^{\Omega'(t)} \right)
    = \frac{n+k}{n}
  \]
  and can be simplified into
  \[
    \Psi(t) (1-t \Omega''(t)) = \lambda (1-t\Omega''(t)).
  \]
  Its two roots are $\tau$ and $\zeta$, 
  where $\zeta$ is characterized by the relation 
  \[
    \Psi(\zeta) = \lambda.
  \]
  For $\lambda < \Lambda$, since $\Psi(\tau) = \Lambda$, we have $\zeta < \tau$,
  so $\zeta$ is the dominant saddle-point.
  Application of the theorem and Stirling approximations then lead to
  \[
    F_{n,k}(u)
    =
    \frac{n^{n+k}}{\sqrt{2 \pi n}}
    \frac{e^{\frac{\weight(\zeta)}{\zeta}n}}{\zeta^{n+k}}
    \frac{e^{\frac{u-1}{2} \log \left( \frac{1}{1 - T \weight''(T)} \right)}}{
    \sqrt{\zeta \weight''(\zeta) - \lambda} }
    ( 1 + \bigO(n^{-1/2}))
  \]
  uniformly for $u$ in a neighborhood of~$1$.
  Dividing by the cardinality of~$\HP_{n,k}$
  derived in Theorem~\ref{th:hp},
  we obtain the generating function
  of the limit probabilities of the number of cycles
  in~$\SHP_{n,k}$:
  \[
    \sum_t \mathds{P}(G \in \SHP_{n,k} \text{ and has~$t$ cycles}\ |\ G \in \HP_{n,k} ) u^t
    \sim
    e^{\frac{u-1}{2} \log \left( \frac{1}{1 - T \weight''(T)} \right)}.
  \]
  For~$u=1$, it is equal to~$1$,
  so with high probability,
  a hypergraph in~$\HP_{n,k}$ has no component
  with more than one cycle.
  For~$u = e^{i t}$, we recognize
  the characteristic function of
  a Poisson law with parameter~$\frac{1}{2} \log \left( \frac{1}{1 - T \weight''(T)} \right)$.

  The same computations hold
  for the analysis of simple hypergraphs, 
  except the generating function~$V(z)$ has to be replaced 
  by~$V(z) - \frac{T \weight''(T)}{2} - \frac{\omega_2^2 T^2}{4}$ 
  to avoid loops and multiple edges (in unicycle components, 
  those can only be two edges of size~$2$).
\end{proof}

More information on the length
of the first cycle and the size
of the component that contains it
could be extracted, following the approach of~\cite{FKP89}.

Observe that the value~$\Lambda$ defined in the theorem is always smaller than~$1$.
Indeed, the equalities
\[
	\tau \Omega''(\tau) = 1
	\quad \text{ and} \quad
	\Psi(\tau) = \Lambda
\]
are equivalent with
\[
	\sum_{t \geq 1}
	t (t-1) \omega_t \frac{\tau^{t-1}}{t!} =1
	\quad \text{ and} \quad
	\sum_{t \geq 1}
	(t-1) \omega_t \frac{\tau^{t-1}}{t!} = \Lambda,
\]
which implies~$\Lambda < 1$.

    \section{Birth of the complex components} \label{sec:birth}

Let us recall that a connected hypergraph is \emph{complex} 
if its excess is strictly positive.
In order to locate the global excess~$k$
at which the first complex component appears,
we compare the asymptotic numbers of hypergraphs
and hypergraphs with no complex component.

Theorem~\ref{th:shp} describes the limit probability
for a hypergraph not to contain any complex component.
A phase transition occurs when~$\frac{k}{n}$ 
reaches the critical value~$\crit -1$, defined in Theorem~\ref{th:forest}.
From an analytic point of view, 
this corresponds to the coalescence of two saddle points.
In this context, the Large Powers Scheme ceases to apply,
so we replace it with the following general theorem,
borrowed from~\cite{BFSS01} 
(see also Theorem~IX$.16$ of~\cite{FS09} for discussions 
and links with the \emph{stable laws} of probability theory)
and adapted for our purpose
(in the original theorem, $\mu = 0$). 
It is also close to Lemma~$3$ of~\cite{JKLP93}.

\begin{theorem} \label{th:coal}
  We consider a generating function~$H(z)$ with nonnegative coefficients
  and a unique isolated singularity at its radius of convergence~$\rho$.
  We also assume that it is continuable in~$\Delta := \{z\ |\ |z| < R, z \notin [\rho, R]\}$
  and there is a~$\lambda \in ]1;2[$
  such that~$H(z) = \sigma - h_1 (1 - z/\rho) + h_{\lambda} (1-z/\rho)^\lambda + \bigO((1-z/\rho)^2)$
  as~$z \rightarrow \rho$ in~$\Delta$. 
  Let~$k = \frac{\sigma}{h_1} n + x n^{1/\lambda}$ with~$x$ bounded, 
  then for any real constant~$\mu$
  \begin{equation} \label{eq:coal}
    [z^n] \frac{H^k(z)}{(1-z/\rho)^\mu} 
    \sim
    \sigma^k \rho^{-n} \frac{1}{n^{(1-\mu)/\lambda}} (h_1/h_\lambda)^{(1-\mu)/\lambda} 
    G\left(\lambda, \mu; \frac{h_1^{1+1/\lambda}}{\sigma h_\lambda^{1/\lambda}} x \right)
  \end{equation}
  where
  $
    G(\lambda,\mu; x) 
    =
    \frac{1}{\lambda \pi} \sum_{k \geq 0} \frac{(-x)^k}{k!} 
    \sin\left(\pi \frac{1-\mu +k}{\lambda} \right) \Gamma\left(\frac{1-\mu+k}{\lambda}\right).
  $
\end{theorem}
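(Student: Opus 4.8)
The plan is to evaluate the coefficient by a Cauchy integral and, through the machinery of singularity analysis in the regime where a movable saddle coalesces with the algebraic singularity at $\rho$, to reduce the asymptotics to a single fixed Hankel-contour integral that one recognizes as $G(\lambda,\mu;\cdot)$. First I would write
\[
  [z^n] \frac{H^k(z)}{(1-z/\rho)^\mu}
  = \frac{1}{2i\pi} \oint \frac{H^k(z)}{(1-z/\rho)^\mu} \frac{dz}{z^{n+1}},
\]
and use the assumed $\Delta$-continuability to deform the contour into a Hankel contour $\mathcal{H}$ hugging the singularity at $\rho$ (entering from $|z| = R$ along the two sides of the cut $[\rho, R]$ and looping around $\rho$); nonnegativity of the coefficients and the uniqueness of the singularity guarantee that the outer arc $|z| = R$ contributes a negligible, exponentially smaller term.

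Next I would set $u = 1 - z/\rho$, insert the singular expansion of $H$, and write the integrand as $\sigma^k \exp(\Phi(u))\, u^{-\mu}$ with
\[
  \Phi(u) = k \log\!\Big( 1 - \tfrac{h_1}{\sigma} u + \tfrac{h_\lambda}{\sigma} u^\lambda + \bigO(u^2) \Big) - (n+1)\log(1-u).
\]
Collecting terms by order and using $k = \tfrac{\sigma}{h_1} n + x n^{1/\lambda}$, the linear coefficient becomes $-\tfrac{h_1}{\sigma} x\, n^{1/\lambda} + \bigO(1)$ while the $u^\lambda$ coefficient is $\tfrac{h_\lambda}{h_1} n + \smallo(n)$; the balance of these two forces the scale $u \sim n^{-1/\lambda}$. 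I would therefore rescale $u = (h_1/h_\lambda)^{1/\lambda}\, t\, n^{-1/\lambda}$, which turns the leading exponent into $t^\lambda - \tilde{x}\, t$ with $\tilde{x} = \tfrac{h_1^{1+1/\lambda}}{\sigma h_\lambda^{1/\lambda}} x$, exactly the argument appearing in the statement. The decisive point is that every remaining contribution is negligible: the $u^2$ terms in $\Phi$ are of order $n u^2 = \bigO(n^{1-2/\lambda})$, which tends to $0$ precisely because $\lambda < 2$, and the cross terms $u^{1+\lambda}$ and the lower-order pieces of the linear and $u^\lambda$ coefficients vanish similarly. Tracking the Jacobian $du$ and the factor $u^{-\mu}$ then yields the prefactor $\sigma^k \rho^{-n} n^{-(1-\mu)/\lambda} (h_1/h_\lambda)^{(1-\mu)/\lambda}$ multiplying $\frac{1}{2i\pi}\int_{\mathcal{H}} e^{t^\lambda - \tilde{x}\, t}\, t^{-\mu}\, dt$.

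It then remains to identify this limiting integral with the stated series. Substituting $t^\lambda \mapsto s$ and expanding $e^{-\tilde{x}\, t}$ brings the integral to the form $\tfrac{1}{\lambda}\sum_{j\ge 0}\frac{(-\tilde{x})^j}{j!}\,\frac{1}{2i\pi}\int_{\mathcal{H}} e^s s^{(1-\mu+j)/\lambda - 1}\, ds$, where each Hankel integral is the representation $1/\Gamma\big(1 - \tfrac{1-\mu+j}{\lambda}\big)$; the reflection formula $\sin(\pi r)\Gamma(r) = \pi/\Gamma(1-r)$ with $r = \tfrac{1-\mu+j}{\lambda}$ then recovers exactly the sine–Gamma series defining $G(\lambda,\mu;\cdot)$. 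This computation is valid for arbitrary real $\mu$, since $u^{-\mu}$ only shifts the exponent of $t$ in the $1/\Gamma$ representation. Assembling the pieces produces the claimed asymptotic equivalent.

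The main obstacle is the rigorous justification of the passage to the limit under the integral sign on the \emph{non-compact} Hankel contour, uniformly in the bounded parameter $x$. One must establish tail bounds showing that only the portion of $\mathcal{H}$ within an $\bigO(n^{-1/\lambda})$ neighborhood of $u = 0$ contributes, control the error terms uniformly there, and handle the branch-cut subtleties caused by the non-integer exponent $\lambda \in {]1,2[}$ (so that $u^\lambda$ is genuinely multivalued and the contour must be laid out consistently on the two banks of the cut). This is precisely the coalescing-saddle phenomenon of~\cite{BFSS01}, so in practice I would invoke their estimates, verifying only that the adaptation to $\mu \neq 0$ and to the shift $\mu = 0 \mapsto \mu$ leaves their uniform bounds intact.
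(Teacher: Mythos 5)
Your proposal is correct and follows essentially the same route as the paper: a Cauchy integral deformed onto a Hankel-type contour at the singularity $\rho$, the rescaling $1-z/\rho \sim n^{-1/\lambda}$ forced by balancing the linear and $u^\lambda$ terms (with the $\bigO(u^2)$ remainder dying because $\lambda < 2$), the substitution $s = t^\lambda$ to reach classical Hankel integrals, and termwise integration combined with the complement formula for $\Gamma$ to recover the series defining $G$. The paper merely fixes a concrete contour (two rays at angles $\pm\pi/(2\lambda)$ meeting at $\rho - n^{-1/\lambda}$) and performs your single rescaling in two steps; these are cosmetic differences.
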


\begin{proof}
  In the Cauchy integral that represents~$[z^n] \frac{H^k(z)}{(1-z/\rho)^\mu}$
  we choose for the contour of integration a positively oriented loop, 
  made of two rays of angle $\pm \pi/(2\lambda)$
  that intersect on the real axis at~$\rho - n^{-1/\lambda}$,
  we set~$z = \rho(1 - t n^{-1/\lambda})$
  \[
    [z^n] \frac{H^k(z)}{(1-z/\rho)^\mu}
    \sim
    \frac{-\sigma^k \rho^{-n}}{2i\pi n^{(1-\mu)/\lambda}} 
    \int t^{-\mu} e^{\frac{h_\lambda}{h_1}t^\lambda} e^{-x \frac{h_1}{\sigma} t} dt
  \]
  The contour of integration comprises now two rays of angle~$\pm \pi/ \lambda$ intersecting at~$-1$.
  Setting~$u =  t^\lambda h_\lambda/h_1$, the contour transforms into a classical Hankel contour,
  starting from~$-\infty$ over the real axis, winding about the origin and returning to~$-\infty$.
  \[
    \frac{-\sigma^k \rho^{-n}}{2i\pi n^{(1-\mu)/\lambda}} \frac{1}{\lambda} (h_1/h_\lambda)^{(1-\mu)/\lambda} 
    \int_{-\infty}^{(0)} e^u e^{-x u^{1/\lambda} h_1^{1+1/\lambda} / (\sigma h_\lambda^{1/\lambda})}
      u^{\frac{1-\mu}{\lambda} - 1} du
  \] 
  Expanding the exponential, integrating termwise, and appealing to the complement
  formula for the Gamma function finally reduces this last form to~\eqref{eq:coal}.
\end{proof}

\begin{theorem} \label{th:shp}
  Let~$\Psi(z)$ denote the function~$\weight'(z) - \frac{\weight(z)}{z}$,
  $\tau$ be implicitly defined by~$\tau \weight''(\tau) = 1$,
  set $\crit = \Psi(\tau)$ and~$\gamma = 1+\tau^2 \weight'''(\tau)$.
  Let $G(\lambda,\mu;x)$ be defined as in Theorem~\ref{th:coal}.
  We consider an excess~$k = (\crit - 1) n + x n^{2/3}$ 
  where~$x$ is bounded.
  Then the number of the hypergraphs in~$\HP_{n,k}$
  with no complex component is equivalent to
  \begin{align} \label{eq:shpcrit}
      \frac{n^{n+k}}{\sqrt{2 \pi n}}
      \frac{e^{\frac{\weight(\tau)}{\tau} n}}{\tau^{n+k}}
      \frac{1}{\sqrt{1 - \crit}}
      \sqrt{\frac{3\pi}{2}} 
      e^{- \frac{x^2}{2(1-\crit)} n^{1/3} - \frac{x^3}{6(1-\crit)^2}} 
      G\left( \frac{3}{2}, \frac{1}{4}; - \frac{3^{2/3} \gamma^{1/3} x}{2 (1-\crit)}\right).
  \end{align}  
  For simple hypergraphs, this number is
  \[
    \frac{n^{n+k}}{\sqrt{2 \pi n}}
      \frac{e^{\frac{\weight(\tau)}{\tau} n}}{\tau^{n+k}}
      \frac{e^{-\frac{1}{2} - \frac{\omega_2^2 \tau^2}{4}}}{\sqrt{1 - \crit}}
      \sqrt{\frac{3\pi}{2}} 
      e^{- \frac{x^2}{2(1-\crit)} n^{1/3} - \frac{x^3}{6(1-\crit)^2}} 
      G\left( \frac{3}{2}, \frac{1}{4}; - \frac{3^{2/3} \gamma^{1/3} x}{2 (1-\crit)}\right).
  \]
\end{theorem}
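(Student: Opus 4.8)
The plan is to count the hypergraphs of $\HP_{n,k}$ with no complex component directly through the tree and unicycle generating functions of Lemma~\ref{th:TUV}. Since every component has excess $\geq -1$ and is a tree, unicyclic, or complex exactly when its excess is $-1$, $0$, or positive, ``no complex component'' means ``only trees and unicycle components''. As in the proof of Theorem~\ref{th:forest}, each such hypergraph of $\SHP_{n,k}$ carries precisely $-k$ trees (here $-k \sim (1-\crit)n > 0$), so the count $\shp_{n,k}$ is the specialization $F_{n,k}(1)$:
\[
  \shp_{n,k} = n! [z^n] \frac{U(z)^{-k}}{(-k)!}\, e^{V(z)}.
\]
First I would locate the dominant singularity at $\rho = \tau e^{-\weight'(\tau)}$, the radius of convergence of $T$: since $z = \phi(t) := t e^{-\weight'(t)}$ satisfies $\phi'(\tau) = e^{-\weight'(\tau)}(1 - \tau\weight''(\tau)) = 0$, the tree function has a square-root singularity $T(z) = \tau - c_1\sqrt{1-z/\rho} + c_2(1-z/\rho) - \cdots$, with $c_1^2 = 2\tau^2/\gamma$ coming from $\phi''(\tau) = -\tfrac{\gamma}{\tau}e^{-\weight'(\tau)}$.

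The crux is that this square-root singularity is \emph{promoted} to the exponent $3/2$ inside $U$ and to the exponent $-1/4$ inside $e^V$, placing us exactly in the coalescence regime of Theorem~\ref{th:coal} with $\lambda = 3/2$ and $\mu = 1/4$. Writing $U(z) = g(T(z))$ with $g(t) = t(1-\Psi(t))$, a short computation gives $g'(t) = 1 - t\weight''(t)$, whence $g'(\tau) = 0$ and $g''(\tau) = -\gamma/\tau$. Because the linear term is killed, substituting the expansion of $T$ yields
\[
  U(z) = \sigma - h_1(1-z/\rho) + h_{3/2}(1-z/\rho)^{3/2} + \bigO\!\big((1-z/\rho)^2\big),
\]
with $\sigma = \tau(1-\crit)$, $h_1 = -\tfrac12 g''(\tau)c_1^2 = \tau$, and, after also computing the cubic term through $c_2$ and $g'''(\tau)$, $h_{3/2} = \tfrac{2\sqrt2\,\tau}{3\sqrt\gamma}$. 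In the same spirit $e^{V(z)} = (1 - T\weight''(T))^{-1/2} = g'(T(z))^{-1/2} \sim (2\gamma)^{-1/4}(1-z/\rho)^{-1/4}$, so the unicyclic factor supplies exactly the singular exponent $\mu = 1/4$ and the constant $c_V = (2\gamma)^{-1/4}$.

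Then I would apply Theorem~\ref{th:coal} to $[z^n]\, U(z)^{-k}(1-z/\rho)^{-1/4}$, with $U$ in the role of $H$, its power being $-k$, and the analytic corrections to $e^{V}$ absorbed in the error. The hypothesis $k = (\crit-1)n + xn^{2/3}$ reads $-k = \tfrac{\sigma}{h_1}n - x n^{2/3}$, which checks $\tfrac{\sigma}{h_1} = 1-\crit$ and fixes the scaled variable to $-x$; the argument of $G$ then collapses, using $h_1=\tau$, $\sigma=\tau(1-\crit)$ and $h_{3/2}^{2/3} = \tfrac{2\tau^{2/3}}{3^{2/3}\gamma^{1/3}}$, to $-\tfrac{3^{2/3}\gamma^{1/3}x}{2(1-\crit)}$. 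Multiplying by $\tfrac{n!}{(-k)!}$ and expanding by Stirling exactly as in the refined part of Theorem~\ref{th:hp}, the block $\tfrac{n!}{(-k)!}\sigma^{-k}\rho^{-n}$ produces $\tfrac{n^{n+k}}{\tau^{n+k}} e^{\frac{\weight(\tau)}{\tau}n}\tfrac{1}{\sqrt{1-\crit}}$ — here $\weight'(\tau) - \crit = \weight(\tau)/\tau$ — times the factor $\exp\!\big(-\tfrac{x^2}{2(1-\crit)}n^{1/3} - \tfrac{x^3}{6(1-\crit)^2}\big)$, while the remaining constants coalesce through $c_V (h_1/h_{3/2})^{1/2} = \tfrac{\sqrt3}{2}$ into the prefactor $\tfrac{1}{\sqrt{2\pi n}}\sqrt{\tfrac{3\pi}{2}}$, giving \eqref{eq:shpcrit}.

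Finally, the simple-hypergraph count follows the identical route: by the remark closing the proof of Theorem~\ref{th:forest}, $V$ is replaced by $V - \tfrac{T\weight''(T)}{2} - \tfrac{\omega_2^2 T^2}{4}$, and since the whole estimate is driven by $z\to\rho$, where $T\to\tau$ with $\tau\weight''(\tau)=1$, this extra factor tends to the constant $e^{-1/2 - \omega_2^2\tau^2/4}$ that multiplies the result. The main obstacle is the bookkeeping of the singular expansion: one must push $U(z)=g(T(z))$ all the way to the $(1-z/\rho)^{3/2}$ term to pin down $h_{3/2}$ (hence $\gamma$ and the exact argument of $G$), and then verify the two constant identities $\sigma/h_1 = 1-\crit$ and $c_V(h_1/h_{3/2})^{1/2} = \sqrt3/2$ so that the coalescence constants of Theorem~\ref{th:coal} combine precisely with the Stirling factors into $\sqrt{3\pi/2}$; keeping the $n^{1/3}$ and constant Stirling corrections coherent with those hidden in $G$ is the delicate point.
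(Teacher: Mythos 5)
Your proposal is correct and follows essentially the same route as the paper's proof: the representation $\shp_{n,k} = n!\,[z^n]\,\frac{U(z)^{-k}}{(-k)!}e^{V(z)}$, the Newton--Puiseux expansions of $T$, $U$ and $e^{V}$ at $\rho = \tau e^{-\weight'(\tau)}$ (your computed constants $\sigma = \tau(1-\crit)$, $h_1 = \tau$, $h_{3/2} = \tfrac{2\sqrt{2}\,\tau}{3\sqrt{\gamma}}$, $c_V = (2\gamma)^{-1/4}$ match the paper's), the application of Theorem~\ref{th:coal} with $\lambda = 3/2$, $\mu = 1/4$, and the Stirling bookkeeping yielding the factor $e^{-\frac{x^2}{2(1-\crit)}n^{1/3} - \frac{x^3}{6(1-\crit)^2}}$, with the simple-hypergraph case handled by the same substitution $V \leftarrow V - \tfrac{T\weight''(T)}{2} - \tfrac{\omega_2^2T^2}{4}$. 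The paper states these steps more tersely, but your argument, including the constant identities $\sigma/h_1 = 1-\crit$ and $c_V(h_1/h_{3/2})^{1/2} = \sqrt{3}/2$, is the same proof carried out in full detail.
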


\begin{proof}
  The number of hypergraphs in $\SHP_{n,k}$ has been derived in Theorem~\ref{th:forest}
  for $k = (\lambda - 1) n$ and $\lambda < \Lambda$.
  We now focus on the case $k = (\Lambda -1) n + x n^{2/3}$.
  The number of hypergraphs in $\SHP_{n,k}$ is again
  \[
    \shp_{n,k} = n! [z^n] \frac{U(z)^{-k}}{(-k)!} e^{V(z)}.
  \]
  The two saddle-points $\tau$ and $\zeta$ defined in Theorem~\ref{th:forest}
  now coalesce. Furthermore, $e^{V(z)}$ also has a singularity for $T(z) = \tau$.
  To extract the asymptotics, we apply Theorem~\ref{th:coal}.
  The Newton-Puiseux expansions of~$T$, $e^V$ and~$U$
  can be derived from Lemma~\ref{th:TUV}
  \begin{align*}
    T(z) &\sim \tau - \tau \sqrt{\frac{2}{\gamma}} \sqrt{1 - z/\rho},
  \\
    e^{V(z)} &\sim (2\gamma)^{-1/4} (1 - z/\rho)^{-1/4},
  \\
    U(z) &= \tau(1 - \Psi(\tau)) - \tau (1 - z/\rho) + 
      \tau \frac{2}{3} \sqrt{\frac{2}{\gamma}} (1 - z/\rho)^{3/2} + \bigO(1-z/\rho)^2,
  \end{align*}
  where~$\rho = \tau e^{- \weight'(\tau)}$.
  Using Theorem~\ref{th:coal}, we obtain
  \[
    \shp_{n,k} \sim
    \frac{n!}{(-k)!} \frac{\sqrt{3}}{2} \frac{(\tau(1-\crit))^{-k}}{\rho^n \sqrt{n}} 
    G\left( \frac{3}{2}, \frac{1}{4};- \frac{3^{2/3} \gamma^{1/3} x}{2 (1-\crit)}\right)
  \]
  which reduces to~\eqref{eq:shpcrit}.

  As in the proof of Theorem~\ref{th:forest},
  for the analysis of simple hypergraphs we replace 
  the generating function~$V(z)$
  with $V(z) - \frac{T \weight''(T)}{2} - \frac{\omega_2^2 T^2}{4}$.
\end{proof}

In Theorem~\ref{th:forest}, we have seen 
that when $k = (\lambda - 1)n$ with $\lambda < \Lambda$,
the probability for a random hypergraph in $\HP_{n,k}$ 
to contain only trees and unicyclic components tends to~$1$.
When~$k = (\crit - 1) n + \bigO(n^{1/3})$, this limit becomes~$\sqrt{2/3}$
because~$G(2/3, 1/4; 0)$ is equal to~$2/(3\sqrt{\pi})$.
It is remarkable that this value does not depend on~$\weight$, therefore
it is the same as in~\cite{FKP89} for graphs.
However, the evolution of this probability between the subcritical and the critical
ranges of excess depends on the parameters~$(\omega_t)$.

\begin{corollary} \label{th:probnocomplex}
  Let~$\tau$, $\crit$ and~$\gamma$ be defined as in Theorem~\ref{th:shp}.
  For~$k = (\lambda - 1) n$ and~$\lambda < \crit$,
  a hypergraph in~$\HP_{n,k}$ or in~$\NMHP_{n,k}$ has no complex component
  with high probability.
  For~$k = (\crit - 1) n + x n^{2/3}$ with~$x$ bounded,
  the limit probability that such a hypergraph has no complex component is
  \[
    \sqrt{\frac{3 \pi}{2}} \exp\left({\frac{-x^3 \gamma}{6(1 - \crit)^3}}\right)
    G\left( \frac{3}{2}, \frac{1}{4};- \frac{3^{2/3} \gamma^{1/3} x}{2 (1-\crit)}\right)
  \]
  where~$G$ is the function defined in Theorem~\ref{th:coal}.
\end{corollary}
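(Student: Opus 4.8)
The plan is to obtain the probability as a ratio of two enumerations that have already been computed. Since a connected component is complex exactly when its excess is strictly positive, a hypergraph has no complex component if and only if all of its components are trees or unicycle components, i.e.\ if and only if it belongs to $\SHP_{n,k}$. Hence the probability in question is $\shp_{n,k}/\hp_{n,k}$, the number of hypergraphs with no complex component (Theorem~\ref{th:shp}) divided by the total number of hypergraphs (Theorem~\ref{th:hp}).

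For the subcritical range $k = (\lambda-1)n$ with $\lambda < \crit$, no new computation is needed. In the proof of Theorem~\ref{th:forest}, the generating function of the limit probabilities of the number of cycles, evaluated at $u=1$, equals~$1$; this says exactly that $\mathds{P}(G \in \SHP_{n,k} \mid G \in \HP_{n,k}) \to 1$, and the same argument applies to $\NMHP_{n,k}$. Thus with high probability there is no complex component.

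For the critical window $k = (\crit-1)n + x n^{2/3}$, I would first observe that at $\lambda = \crit$ the saddle point $\zeta$ of Theorem~\ref{th:hp} coincides with~$\tau$, and since $\tau\weight''(\tau)=1$ we have $\zeta\weight''(\zeta)-\lambda = 1-\crit > 0$ (the inequality $\crit<1$ was established at the end of Section~\ref{sec:forest}). In particular there is no coalescence of saddle points for $\hp_{n,k}$ itself, so the refined asymptotic of Theorem~\ref{th:hp} applies verbatim with $\zeta=\tau$; substituting $\tau^2\weight'''(\tau)=\gamma-1$ turns its cubic correction into $\exp\bigl(\frac{x^3}{6}\frac{\gamma-(1-\crit)}{(1-\crit)^3}\bigr)$. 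Dividing the expression~\eqref{eq:shpcrit} for $\shp_{n,k}$ by this asymptotic for $\hp_{n,k}$, the prefactors $\frac{n^{n+k}}{\sqrt{2\pi n}}\frac{e^{\weight(\tau)n/\tau}}{\tau^{n+k}}\frac{1}{\sqrt{1-\crit}}$ and the quadratic term $e^{-x^2 n^{1/3}/(2(1-\crit))}$ cancel, leaving the factor $\sqrt{3\pi/2}\,G(\frac{3}{2},\frac{1}{4};-\frac{3^{2/3}\gamma^{1/3}x}{2(1-\crit)})$ untouched, as $\hp_{n,k}$ carries no such factor. The only remaining step is to combine the two cubic exponents $-\frac{x^3}{6(1-\crit)^2}$ (from $\shp_{n,k}$) and $-\frac{x^3}{6}\frac{\gamma-(1-\crit)}{(1-\crit)^3}$ (from dividing by $\hp_{n,k}$), which collapse to $-\frac{x^3\gamma}{6(1-\crit)^3}$, yielding precisely the claimed formula.

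The simple-hypergraph case follows identically: both the count of simple hypergraphs with no complex component (second formula of Theorem~\ref{th:shp}) and the total $\nmhp_{n,k}$ (Theorem~\ref{th:hp}) carry the extra factor $\exp(-\frac{1}{2}-\frac{\omega_2^2\tau^2}{4})$ at $\zeta=\tau$, so it cancels in the ratio and the limit probability is the same as for $\HP_{n,k}$. The one subtlety I would verify most carefully — rather than a genuine obstacle — is that the refined asymptotics of Theorems~\ref{th:hp} and~\ref{th:shp} hold \emph{uniformly} for bounded~$x$, so that the ratio of the two equivalents is itself an equivalent; this is ensured because the error terms in both theorems are of relative order $n^{-1/2}$ (from the Large Powers Theorem for $\hp_{n,k}$ and from Theorem~\ref{th:coal} for $\shp_{n,k}$), uniformly over the relevant parameter range.
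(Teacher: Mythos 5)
Your proposal is correct and follows essentially the same route as the paper: the paper likewise specializes the second (refined) assertion of Theorem~\ref{th:hp} to $\lambda = \crit$, where $\zeta = \tau$, $\zeta\weight''(\zeta)-\lambda = 1-\crit$ and $\zeta^2\weight'''(\zeta)+\lambda = \gamma-1+\crit$, and then divides Equation~\eqref{eq:shpcrit} by the resulting estimate of $\hp_{n,k}$, with the two cubic exponents combining to $-x^3\gamma/(6(1-\crit)^3)$ exactly as you compute. Your additional remarks (the subcritical case being already contained in Theorem~\ref{th:forest}, the cancellation of the factor $e^{-1/2-\omega_2^2\tau^2/4}$ in the simple case, and the uniformity in bounded $x$) are consistent with, and slightly more explicit than, the paper's one-line treatment.
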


\begin{proof}
  From the second assertion of Theorem~\ref{th:hp}
  we deduce the asymptotic number of hypergraphs in~$\HP_{n,k}$
  when~$k = (1 - \crit) n + x n^{2/3}$
  \[
    \hp_{n,k} \sim 
      \frac{n^{n+k}}{\sqrt{2 \pi n}}
      \frac{e^{\frac{\weight(\tau)}{\tau} n}}{\tau^{n+k}}
      \frac{e^{ 
      \frac{-x^2}{2(1 - \crit)} n^{1/3} 
      + \frac{x^3}{6} \frac{\gamma - 1 + \crit}{(1 - \crit)^3} 
    }}{\sqrt{1-\crit}}.
  \]
  Equation~\ref{eq:shpcrit} divided by this estimation of~$\hp_{n,k}$
  leads to the result announced. 
  The computations are the same for simple hypergraphs.
\end{proof}

Theorem~\ref{th:coal} does not apply when~$H(z)$ is periodic.
This is why we restricted $\weight(y)/y$ not to be of the form~$f(z^d)$
where~$d > 1$ and~$f(z)$ is a power series 
with a strictly positive radius of convergence.
An unfortunate consequence is that Theorems~\ref{th:hp} and~\ref{th:shp} do
not apply to the important but already analyzed case of $d$-uniform hypergraphs. 
However, the expression of the critical excess is still valid.
For the~$d$-uniform hypergraphs, $\weight(z) = \frac{z^d}{d!}$,
$\Psi(z) = \frac{(d-1)}{d!} z^{d-1}$
and~$\tau^{d-1} = (d-2)!$, 
so we obtain~$k = \frac{1-d}{d} n$ for the critical excess, 
which corresponds to a number of edges~$m = \frac{n}{d (d-1)}$,
a result already derived in~\cite{SS85}.

    \section{Kernels and Connected Hypergraphs} \label{sec:kernels}

In the seminal articles~\cite{W77} and~\cite{W80}, Wright establishes the connection
between the asymptotics of connected graphs with~$n$ vertices and excess~$k$ 
and the enumeration of the connected \emph{kernels}, 
which are multigraphs with no vertex of degree less than~$3$.
This relation was then extensively studied in~\cite{JKLP93}
and the notions of excess and kernels were extended to hypergraphs in~\cite{KL97}.

A kernel is a hypergraph with additional constraints that ensure that:
\begin{itemize}
  \item
    each hypergraph can be reduced to a unique kernel,
  \item
    the excess of a hypergraph that contain no tree component
    is equal to the excess of its kernel,
  \item
    for any integer~$k$, there is a finite number of kernels of excess~$k$,
  \item
    the generating function of hypergraphs of excess~$k$
    can be derived from the generating function of kernels of excess~$k$.
\end{itemize}
Observe that the two last requirements oppose each other:
the third one impose the kernels to be \emph{elementary},
but the fourth one means they should keep trace
of the structure of the hypergraph.
Following~\cite{KL97}, we define the \emph{kernel} of a hypergraph~$G$
as the result of the repeated execution of the following operations:
\begin{enumerate}
  \item 
    delete all the vertices of degree~$\leq 1$,
  \item 
    delete all the edges of size~$\leq 1$,
  \item 
    if two edges~$(a,v)$ and~$(v,b)$ of size~$2$
    have one common vertex~$v$ of degree~$2$,
    delete~$v$ and replace those edges with~$(a,b)$,
  \item 
    delete the connected components that consist
    of one vertex~$v$ of degree~$2$ and one edge~$(v,v)$ of size~$2$.
\end{enumerate}

The following lemma
has already been derived for uniform hypergraphs in~\cite{KL97}.
We give a new and more general proof. 
We also introduce the definition of \emph{clean} kernels,
borrowed from~\cite{KL97},
and derive an expression for their generating function.
As we will see in the proof of Theorem~\ref{th:complexe},
with high probability the kernel of a random hypergraph
in the critical window is clean.

\begin{lemma} \label{th:kernel}
  The number of kernels of excess~$k$ is finite
  and each of them contains at most~$3k$ edges of size~$2$.
  We say that a kernel is \emph{clean} if this bound is reached.
  The generating function of clean kernels with excess~$k$ is %
  \begin{equation} \label{eq:CleanKernel}
    e_k (1 + \omega_3 z^2)^{2k} \omega_2^{3k} z^{2k},
  \end{equation}
  where $e_k = \frac{(6k)!}{(3!)^{2k} 2^{3k} (3k)! (2k)!}$
  and the variables~$w$ and~$x$ have been omitted.
  The generating function of connected clean kernels with excess~$k$ is
  \begin{equation} \label{eq:connectedCleanKernel}
    c_k (1 + \omega_3 z^2)^{2k} \omega_2^{3k} z^{2k},
  \end{equation}
  where~$c_k = [z^{2k}] \log \sum_n e_n z^{2n}$.
\end{lemma}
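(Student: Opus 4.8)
The plan is to read the defining invariants of a kernel off the reduction rules, convert the excess into a degree/size budget, extract the bound and the extremal structure by charging arguments, and then translate the resulting structure theorem into generating functions.

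First I would record that, being a fixed point of the four operations, a kernel~$K$ of excess~$k$ satisfies: every vertex has degree~$\ge 2$, every edge has size~$\ge 2$, and no degree-$2$ vertex is incident to two edges of size~$2$ (rules~1,~2,~3; rule~4 removes the isolated loops). Setting $\delta(v) = \deg(v) - 2 \ge 0$ and $\epsilon(e) = |e| - 2 \ge 0$ and using $k = l - n - m$ together with $l = \sum_v \deg(v) = \sum_e |e|$, one gets the master identity
\[
  2k = \sum_{v} \delta(v) + \sum_{e} \epsilon(e).
\]
In particular the number of vertices of degree~$\ge 3$ and the number of edges of size~$\ge 3$ are each at most~$2k$, and all degrees and sizes are at most~$2k+2$; bounding the degree-$2$ vertices and size-$2$ edges as below then leaves finitely many kernels of excess~$k$.

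For the bound on the number~$m_2$ of size-$2$ edges, let $d_2(v)$ be the number of size-$2$ edges at~$v$, so $2 m_2 = \sum_v d_2(v)$. Always $d_2(v) \le \deg(v)$, while rule~3 forces $d_2(v) \le 1$ when $\deg(v) = 2$; hence a degree-$2$ vertex carries at least one incidence of a size-$\ge 3$ edge, and counting those incidences gives $n_2 \le \sum_{|e|\ge 3} |e|$ for the number~$n_2$ of degree-$2$ vertices. Feeding $d_2(v) \le \delta(v) + 2$ for $\deg(v)\ge 3$ and $d_2(v)\le 1$ otherwise into $2m_2 = \sum_v d_2(v)$, then replacing $n_2$ by the previous estimate and using $n_{\ge 3} \le \sum_v \delta(v)$ and $m_{\ge 3} \le \sum_e \epsilon(e)$, all terms collapse via the master identity to $2 m_2 \le 2k + 2\big(\sum_v \delta(v) + \sum_e \epsilon(e)\big) = 6k$, i.e.\ $m_2 \le 3k$. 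Tracking the equality cases is the crux: $m_2 = 3k$ forces every degree to lie in~$\{2,3\}$ and every size in~$\{2,3\}$, every degree-$3$ vertex to meet only size-$2$ edges, every size-$3$ edge to meet only degree-$2$ vertices, and every degree-$2$ vertex to meet exactly one size-$2$ and one size-$3$ edge.

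This equality analysis is exactly a structure theorem for clean kernels: contracting each size-$3$ edge to a point turns a clean kernel into a cubic multigraph on~$2k$ vertices with~$3k$ size-$2$ edges, and conversely a clean kernel is obtained from such a cubic \emph{skeleton} by independently either keeping each vertex (degree~$3$) or \emph{exploding} it into a size-$3$ edge on three fresh degree-$2$ vertices, its three half-edges redistributed to them. Since exploding contributes one size-$3$ edge and a net two vertices, each skeleton vertex carries a decoration factor $1 + \omega_3 z^2$, while the skeleton itself contributes $e_k\, \omega_2^{3k} z^{2k}$, where $e_k$ is the configuration-model count of cubic multigraphs on~$2k$ labelled vertices: $(6k)!$ orderings of half-edges, divided by $(3!)^{2k}$ (unordered half-edges per vertex), $2^{3k}$ (unordered endpoints per edge), $(3k)!$ (unordered edges) and $(2k)!$ (the exponential-generating normalisation). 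This yields~\eqref{eq:CleanKernel}. Finally, a clean kernel is a set of connected clean kernels --- its components are clean because the bound $m_2 \le 3k$ is additive over components and must be tight in each --- and explosions are local, so connectivity is decided entirely by the skeleton; as an excess-$k$ skeleton has exactly~$2k$ vertices, the exponential formula for labelled structures reads $\sum_k c_k z^{2k} = \log \sum_n e_n z^{2n}$ with~$z$ marking skeleton vertices, and multiplying by the common decoration gives~\eqref{eq:connectedCleanKernel}. The main obstacle is the bookkeeping of symmetry factors in~$e_k$ and checking that the decorations carry the right labelled normalisation, so that the set/connected passage is literally the exponential formula at the skeleton level.
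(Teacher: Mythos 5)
Your proof is correct and follows essentially the same route as the paper for the main claims: your master identity $2k = \sum_v \delta(v) + \sum_e \epsilon(e)$ repackages the paper's three inequalities (degrees $\ge 2$ give the paper's~\eqref{eq:vdeg}, sizes $\ge 2$ give~\eqref{eq:esize}, and the rule-3 constraint that a degree-$2$ vertex meets at most one size-$2$ edge gives~\eqref{eq:ev}), their combination yields $m_2 \le 3k$, the equality analysis produces the same structure theorem, and the clean-kernel generating functions come from the same cubic-multigraph skeleton via the substitution $z \mapsto z + \omega_3 z^3$ together with the exponential (set/log) formula relating $e_k$ and $c_k$. The one genuine divergence is the finiteness statement: the paper introduces \emph{dualized} kernels (kernels in which every size-$2$ edge contains a vertex of degree at least $3$), derives $7k \ge n + m$ for those, and then argues that each dualized kernel corresponds to only finitely many kernels; you instead bound everything directly from the master identity and an incidence count --- $n_{\ge 3} \le 2k$, $m_{\ge 3} \le 2k$, all degrees and sizes at most $2k+2$, $n_2 \le \sum_{|e|\ge 3} |e| \le 6k$, and $m_2 \le 3k$ --- which is more self-contained and avoids the auxiliary correspondence between dualized and ordinary kernels. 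Both arguments are sound; yours is arguably the cleaner one on that point, while the paper's dualization fits its broader theme of vertex/edge duality. One small thing worth making explicit in your write-up: the skeleton count $e_k$ is a count \emph{weighted by compensation factors} (this is exactly why the configuration-model quotient $(6k)!/\bigl((3!)^{2k} 2^{3k} (3k)!\bigr)$ is the right number, with no correction for loops and multiple edges), which is consistent with the weight convention~\eqref{eq:weight} used throughout the paper.
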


\begin{proof}
  By definition of the excess, we have
  \[
    k+n+m = \sum_{e \in E} |e| = \sum_{v \in V} \deg(v).
  \]
  By construction, the vertices (resp. edges) of a kernel
  have degree (resp. size) at least~$2$, so
  \begin{align}
    k+n+m &\geq 3m - m_2, \label{eq:vdeg} \\
    k+n+m &\geq 3n - n_2, \label{eq:esize}
  \end{align}
  where~$n_2$ (resp.~$m_2$) is the number of vertices of degree~$2$ (resp. edges of size~$2$).
  Furthermore, each vertex of degree~$2$ belongs to an edge of size at least~$3$, so
  \begin{equation}
    k+n+m \geq 2m_2 + n_2. \label{eq:ev}
  \end{equation}
  Summing those three inequalities, we obtain~$3k \geq m_2$.

  This bound is reached if and only if~\eqref{eq:vdeg}, \eqref{eq:esize} and~\eqref{eq:ev}
  are in fact equalities. Therefore, the vertices (resp. edges) 
  of a clean kernel have degree (resp. size)~$2$ or~$3$, 
  each vertex of degree~$2$ belongs to exactly one edge of size~$3$
  and all the vertices of degree~$3$ belongs to edges of size~$2$.
  Consequently, any clean kernel can be obtained 
  from a cubic multigraph with~$2k$ vertices
  through substitutions of vertices of degree~$3$
  by groups of three vertices of degree~$2$ 
  that belong to a common edge of size~$3$.
  This construction is illustrated in Figure~\ref{fig:cubictocleankernel}.
  \begin{figure}
      \begin{center}
      \includegraphics[scale = 0.50]{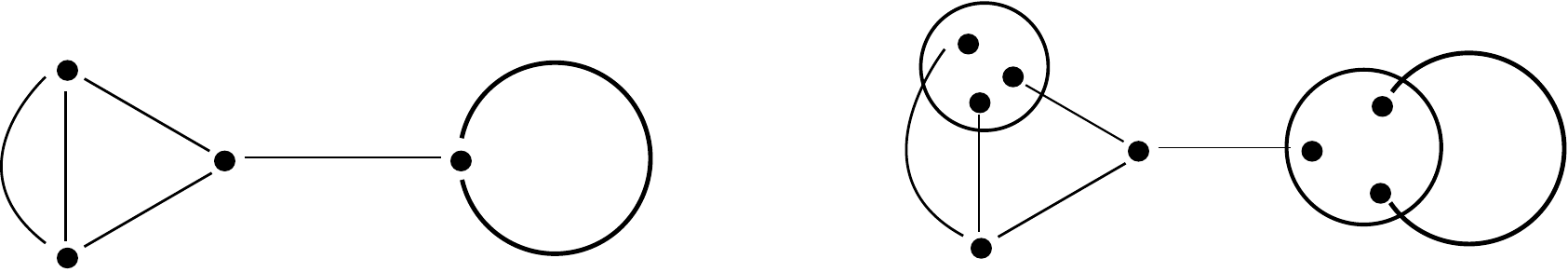}
      \end{center}
      \caption{A cubic multigraph and one of the clean kernels 
            that can be obtained from it. The labels have been omitted.}
      \label{fig:cubictocleankernel}
  \end{figure}
  Consequently, if~$f(z)$ denotes the generating function 
  of a family of cubic multigraphs,
  where~$z$ marks the vertices, then the generating function
  of the corresponding clean kernels is~$f(z + \omega_3 z^3)$.
  The number of cubic multigraphs of excess~$k$ 
  (\textit{i.e.} the sum of their compensation factors)
  is $(2k)! e_k$, so the generating function of cubic multigraphs of excess~$k$
  is~$\sum_{k \geq 0} e_k z^{2k}$.
  A cubic multigraph is a set of connected cubic multigraphs,
  so the value~$(2k)! c_k$ defined in the theorem 
  is the number of connected cubic multigraphs.

  To prove that the total number of kernels of excess~$k$ is bounded,
  we introduce the \emph{dualized} kernels, which are kernels where
  each edge of size~$2$ contains a vertex of degree at least~$3$.
  This implies the dual inequality of~\eqref{eq:ev}
  \[
    k+n+m \geq 2n_2 + m_2,
  \]
  which leads to~$7k \geq n + m$.
  Finally, each dualized kernel matches a finite number of normal kernels
  by replacing some vertices of degree~$2$ that do not belong to any edge of size~$2$
  with edges of size~$2$.
  This substitution is illustrated in Figure~\ref{fig:dualtonormal}.
    \begin{figure}
        \begin{center}
        \includegraphics[scale = 0.50]{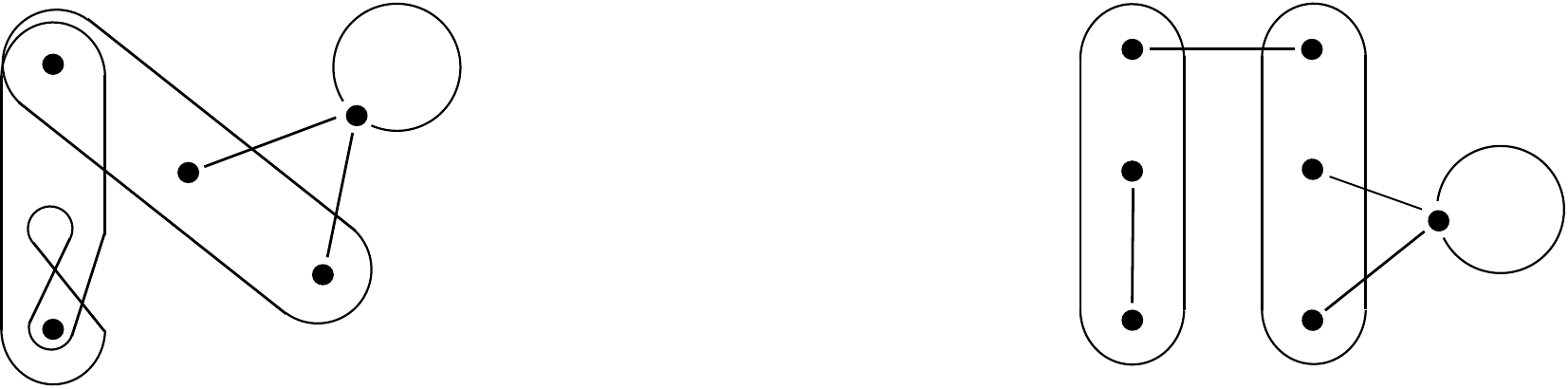}
        \end{center}
        \caption{A dualized kernel and one of the corresponding kernels. 
            The labels have been omitted.}
        \label{fig:dualtonormal}
    \end{figure}
\end{proof}

The previous lemma gives a way to construct 
all hypergraphs with fixed excess~$k$
that contain neither trees nor unicyclic components,
and all connected hypergraphs with fixed excess~$k$.
It starts from the finite corresponding set of kernels of excess~$k$,
adds rooted trees to its vertices, 
replaces the edges of size~$2$ with paths of trees
and adds rooted trees into the edges of size greater than~$2$.

\begin{lemma} \label{th:gfkernel}
    Let $\hpUV_k(z)$ denote the generating function
    of hypergraphs with excess~$k$ that contain
    neither trees nor unicyclic components,
    and $\chp_k(z)$ the generating function
    of connected hypergraphs with excess~$k$.
    With the notations of Lemma~\ref{th:kernel},
    we have
    \begin{align*}
      \hpUV_k(z) 
    &= \sum_{\ell=0}^{3k} 
      \frac{E_{k,\ell}(T(z))}{\left(1-T(z)\Omega''(T(z))\right)^\ell},
    \\
      \chp_k(z)
    &=
      \sum_{\ell=0}^{3k} 
        \frac{C_{k,\ell}(T(z))}{\left(1-T(z)\Omega''(T(z))\right)^\ell},
    \end{align*}
    where the functions $E_{k,\ell}(z)$ and $C_{k,\ell}(z)$ are entire functions and
    \begin{align*}
      E_{k,3k}(t) &= 
      e_k 
      \left( 1 + t^2 \Omega'''(t) \right)^{2k}
      t^{2k}
      \Omega''(t)^{3k},
        \\
      C_{k,3k}(t) &=
      c_k 
      \left( 1 + t^2 \Omega'''(t) \right)^{2k}
      t^{2k}
      \Omega''(t)^{3k}.
    \end{align*}
\end{lemma}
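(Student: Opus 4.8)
The plan is to turn the combinatorial construction described just before the lemma---start from a kernel of excess~$k$, then decorate it---into a substitution of generating functions, and read off both the pole order and the leading coefficients from this dictionary. The key observation is that each elementary piece of a kernel contributes a factor governed by the functions of Lemma~\ref{th:TUV}, with the variable~$z$ replaced by~$T(z)$: a vertex of the kernel carries a rooted tree, hence the substitution $z \mapsto T(z)$; an edge of size~$2$ is blown up into a path of trees, contributing the factor $P(z) = \weight''(T)/(1 - T\weight''(T))$; and an edge of size $t \geq 3$ with its incident rooted trees contributes an entire factor in~$T$ (essentially a derivative of~$\weight$ evaluated at~$T$). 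The only source of singularity at $T = \tau$ (equivalently $1 - T\weight''(T) = 0$) is the path-of-trees factor~$P$, which has a simple pole there. Since a kernel of excess~$k$ has at most~$3k$ edges of size~$2$ by the first part of the lemma, summing over kernels produces a rational function in $P$, i.e.\ a sum $\sum_{\ell=0}^{3k} E_{k,\ell}(T)/(1 - T\weight''(T))^\ell$ with each $E_{k,\ell}$ entire; the bound $\ell \le 3k$ is exactly the edge-of-size-$2$ bound.

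First I would make the substitution dictionary precise. For $\hpUV_k$, a hypergraph of excess~$k$ with no tree or unicyclic component reduces to a kernel of excess~$k$, and conversely is recovered uniquely by hanging a rooted tree at each kernel vertex, expanding each size-$2$ edge into a path of trees, and inserting rooted trees into the holes of each larger edge. Translating each operation into generating functions gives the stated form, where the denominator power~$\ell$ records the number of surviving size-$2$ edges. For $\chp_k$ the argument is identical but restricted to connected kernels, which is why $e_k$ is replaced by~$c_k$. Then I would isolate the top-order term $\ell = 3k$: this coefficient comes exactly from the \emph{clean} kernels, the ones attaining $m_2 = 3k$, since only those contribute the maximal number of path-of-trees factors. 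Using the clean-kernel generating functions~\eqref{eq:CleanKernel} and~\eqref{eq:connectedCleanKernel} from Lemma~\ref{th:kernel} and applying the substitution $z \mapsto T$ together with the path-of-trees factor raised to the power~$3k$, the explicit formulas for $E_{k,3k}(t)$ and $C_{k,3k}(t)$ should fall out. Concretely, the factor $(1 + \omega_3 z^2)$ from the clean-kernel construction becomes, under decoration, $1 + t^2 \weight'''(t)$ (the $\weight'''$ accounting for size-$3$ edges with rooted trees in their remaining holes), the bare $z^{2k}$ becomes $t^{2k}$, and $\omega_2^{3k}$ is promoted to $\weight''(t)^{3k}$ coming from the $3k$ path-of-trees numerators.

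The main obstacle I expect is verifying that the $E_{k,\ell}$ and $C_{k,\ell}$ for $\ell < 3k$ are genuinely \emph{entire}, i.e.\ that no spurious singularity in $T$ beyond the pole at $1 - T\weight''(T) = 0$ is introduced. Since $T(z)$ itself is analytic up to its own singularity at $z = \rho$ and each decoration factor is either entire in~$T$ (the tree and large-edge contributions, built from $\weight$ and its derivatives) or a power of the single rational factor $P$, this should hold; but one must check carefully that the path-of-trees numerator $\weight''(T)$ and the large-edge factors do not vanish in a way that forces negative powers, and that the finite sum over the finitely many kernels of excess~$k$ (finiteness guaranteed by Lemma~\ref{th:kernel}) keeps everything polynomial in~$P$. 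A secondary subtlety is pinning down the exact numerical constants in $E_{k,3k}$ and $C_{k,3k}$: one must confirm that the $f(z) \mapsto f(z + \omega_3 z^3)$ substitution of Lemma~\ref{th:kernel} interacts correctly with the tree substitution $z \mapsto T$ and the per-edge hole-filling, so that the clean-kernel count $e_k$ (resp.~$c_k$) passes through unchanged as the leading coefficient. Once the dictionary is set up this is a bookkeeping verification rather than a conceptual difficulty.
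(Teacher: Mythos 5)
Your proposal follows essentially the same route as the paper's proof: reduce to the kernel generating function (a polynomial in $z,\omega_2,\omega_3,\ldots$ with finitely many terms by Lemma~\ref{th:kernel}), apply the substitution $z \mapsto T(z)$, $\omega_2 \mapsto \weight''(T)/(1-T\weight''(T))$, $\omega_t \mapsto \weight^{(t)}(T)$ for $t>2$, note that the denominator exponent $\ell$ counts surviving size-$2$ edges and is therefore at most $3k$, and identify the top term $\ell=3k$ with the clean kernels (connected kernels giving $c_k$ in place of $e_k$). Your residual worry about entirety is settled exactly as you suspect: the kernel generating function is a polynomial and $\weight$ (hence every $\weight^{(t)}$) is entire because the $\omega_t$ are bounded, so the coefficients of the powers of the path-of-trees factor are automatically entire in $T$.
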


\begin{proof}
  Theorem~\ref{th:kernel} implies that the generating function
  of the kernels of excess~$k$ is a multivariate
  polynomial with variables~$z,\omega_2,\omega_3,\ldots$.
  Let us write it as the sum of two polynomials,
  $P_k$ and~$Q_k$, 
  one corresponding to clean kernels and the other
  to the rest of the kernels of excess~$k$.
  According to Theorem~\ref{th:kernel}, 
  $P_k$ is equal to~$e_k (1 + \omega_3 z^2)^{2k} \omega_2^{3k} z^{2k}$.
  By definition of the clean kernels,
  the degree of~$Q_k$ with respect to~$\omega_2$
  is strictly less than~$3k$.

  One can develop a kernel into a hypergraph
  by adding rooted trees to its vertices,
  replacing its edges of size~$2$ by paths of trees
  and adding rooted trees into the edges of size greater than~$2$.
  This matches the following substitutions in the generating function of kernels:
  $z \leftarrow T(z)$, $w_2 \leftarrow \frac{\weight''(T)}{1 - T \weight''(T)}$
  and~$w_t \leftarrow \weight^{(t)}(T)$ for all~$t > 2$.
  With this construction, starting with all kernels of excess~$k$, 
  we obtain all hypergraphs with excess~$k$ that contain 
  neither trees nor unicyclic components.
  Applying this substitution to~$P_k + Q_k$, 
  we obtain for their generating functions~$\hpUV_k(z)$
  \[
    \hpUV_k(z) 
    = 
    e_k 
    (1 + T(z)^2 \weight'''(T(z)))^{2k} 
    \left( \frac{\weight''(T(z))}{1 - T(z) \weight''(T(z))}\right)^{3k} 
    T(z)^{2k} 
    + \cdots
  \]
  where the ``$\cdots$'' hides terms
  with a denominator~$1 - T(z) \weight''(T(z))$
  at a power at most~$3k-1$.
  
  A hypergraph is connected if and only if its kernel is connected.
  The previous construction applied to connected kernels of excess~$k$
  leads to the similar following expression for the generating function
  of connected hypergraphs of excess~$k$
  \[
    \chp_k(z) 
    = 
    c_k 
    (1 + T(z)^2 \weight'''(T(z)))^{2k} 
    \left( \frac{\weight''(T(z))}{1 - T(z) \weight''(T(z))}\right)^{3k} 
    T(z)^{2k} 
    + \cdots
  \]
\end{proof}

From the previous lemma, it is easy to derive
the asymptotic number of connected hypergraphs with fixed excess.
The corresponding result for uniform hypergraphs
can be found in~\cite{KL97}.
As a corollary, those hypergraphs have clean kernels with high probability.

\begin{theorem} \label{th:connected}
  The number of connected hypergraphs 
  with~$n$ vertices and fixed excess~$k$ is
  \[
    n! [z^n] \chp_k(z) 
    \sim
    \frac{c_k  \sqrt{2 \pi}}{\Gamma\left( \frac{3k}{2} \right)}
    \left( \frac{\sqrt{\gamma}}{2^{3/2} \tau} \right)^k
    \left( \frac{e}{\rho} \right)^n
    n^{n + (3k-1)/2},
  \]
  where~$c_k$ is defined in Lemma~\ref{th:kernel},
  $\tau$ is the solution of~$\tau \weight''(\tau) = 1$,
  $\rho = \tau e^{- \weight'(\tau)}$
  and~$\gamma = 1 + \tau^2 \weight'''(\tau)$.
  The same results apply to connected simple hypergraphs.
\end{theorem}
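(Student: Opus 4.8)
The plan is to extract the coefficient asymptotics of $\chp_k(z)$ directly from its closed form in Lemma~\ref{th:gfkernel} by singularity analysis, since that lemma has already isolated the clean-kernel term $C_{k,3k}$ which will govern the leading behaviour. Writing
\[
  \chp_k(z) = \sum_{\ell=0}^{3k} \frac{C_{k,\ell}(T(z))}{\bigl(1 - T(z)\weight''(T(z))\bigr)^{\ell}},
\]
I first observe that each summand is singular only at $\rho = \tau e^{-\weight'(\tau)}$: the numerators $C_{k,\ell}$ are entire, so $C_{k,\ell}(T(z))$ has exactly the branch point of the tree function $T$, while each denominator vanishes precisely when $T(z)\weight''(T(z)) = 1$, that is when $T(z) = \tau$, which again happens at $z = \rho$. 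The aperiodicity assumption on $\weight(z)/z$ ensures that $\rho$ is the unique dominant singularity on $|z| = \rho$, so that $\chp_k$ is $\Delta$-analytic and the transfer theorem of~\cite{FS09} applies.

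Next I would pin down the dominant summand through the Newton--Puiseux expansion. From $T(z) \sim \tau - \tau\sqrt{2/\gamma}\,\sqrt{1-z/\rho}$ (established in the proof of Theorem~\ref{th:shp}), together with $\tau\weight''(\tau)=1$ and $\gamma = 1 + \tau^2\weight'''(\tau)$, a short computation — equivalently, the expansion $e^{V(z)} \sim (2\gamma)^{-1/4}(1-z/\rho)^{-1/4}$ already recorded in Theorem~\ref{th:shp} — yields
\[
  1 - T(z)\weight''(T(z)) \sim \sqrt{2\gamma}\,\sqrt{1 - z/\rho}.
\]
Thus the $\ell$-th summand is of order $(1-z/\rho)^{-\ell/2}$, so the term $\ell = 3k$ strictly dominates, the entire numerators contributing only their value at $\tau$ to leading order. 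Using $\weight''(\tau) = 1/\tau$ and $1+\tau^2\weight'''(\tau) = \gamma$ gives $C_{k,3k}(\tau) = c_k\gamma^{2k}\tau^{-k}$, and combining with the factor $(2\gamma)^{-3k/2}$ from the denominator produces the singular expansion
\[
  \chp_k(z) \sim c_k\Bigl(\frac{\sqrt{\gamma}}{2^{3/2}\tau}\Bigr)^{k}(1 - z/\rho)^{-3k/2}.
\]

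It then remains to transfer this to coefficients: the transfer theorem gives
\[
  [z^n]\chp_k(z) \sim \frac{c_k}{\Gamma(3k/2)}\Bigl(\frac{\sqrt{\gamma}}{2^{3/2}\tau}\Bigr)^{k} n^{3k/2-1}\rho^{-n},
\]
and multiplying by $n!$ through Stirling's formula $n! \sim \sqrt{2\pi n}\,(n/e)^n$ gathers the powers of $n$ into $n^{n+(3k-1)/2}$ and the exponential scale into the announced factor. For connected simple hypergraphs I would run the same argument after the modification used in Theorems~\ref{th:forest} and~\ref{th:shp} (subtracting the loop and multiple-edge corrections $\tfrac{T\weight''(T)}{2} + \tfrac{\omega_2^2 T^2}{4}$): this alters only contributions that are analytic and nonvanishing at $\rho$, leaving the dominant singular term, hence the leading asymptotic, unchanged. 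Since Lemma~\ref{th:gfkernel} has already done the combinatorial work of identifying $C_{k,3k}$, I expect the only real care to lie in the analytic justification: verifying $\Delta$-continuability and the uniqueness of the dominant singularity, and checking that the subdominant summands $\ell < 3k$ together with the square-root tails of the numerators $C_{k,\ell}(T(z))$ are genuinely of lower order.
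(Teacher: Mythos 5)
Your treatment of the main (weighted, non-simple) statement is essentially the paper's own proof, carried out in more detail: the paper likewise injects the Puiseux expansion of $T$ into the expression of $\chp_k(z)$ from Lemma~\ref{th:gfkernel}, keeps the dominant clean-kernel term, and concludes by singularity analysis; your computations $C_{k,3k}(\tau)=c_k\gamma^{2k}\tau^{-k}$ and $1-T(z)\weight''(T(z))\sim\sqrt{2\gamma}\sqrt{1-z/\rho}$, and the resulting singular expansion, are correct. One bookkeeping remark: if you actually perform the final Stirling step, the exponential scale that comes out is $e^{-n}\rho^{-n}=(e\rho)^{-n}$, not the announced $(e/\rho)^n$; the factor printed in the theorem is evidently a typo (for graphs, $\weight(z)=z^2/2$ gives $\tau=1$, $\rho=e^{-1}$, and Wright's asymptotics carry no factor $e^{2n}$). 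Your assertion that Stirling ``gathers the exponential scale into the announced factor'' glosses over this, and a careful write-up should flag it rather than claim agreement.

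The genuine gap is the last claim, about connected \emph{simple} hypergraphs. The fix you propose --- replacing $V(z)$ by $V(z)-\frac{T\weight''(T)}{2}-\frac{\omega_2^2T^2}{4}$ as in Theorems~\ref{th:forest} and~\ref{th:shp} --- is the correction that removes loops and double edges \emph{inside unicyclic components}, and it is vacuous here: for $k\geq 1$ the generating function $\chp_k(z)$ contains no factor $e^{V(z)}$ at all, so this modification changes nothing and in particular does not exclude the non-simple connected hypergraphs of excess $k$. What must be shown is that the generating function of connected, non-simple hypergraphs of excess $k$ has a strictly smaller singular exponent, and this requires a combinatorial argument about kernels, which is exactly how the paper proceeds: restricting to clean kernels, any non-simple such hypergraph contains a loop or two parallel edges of size $2$, hence its kernel has an edge of size $2$ that is \emph{not} expanded into a non-empty path of trees; consequently its generating function carries the denominator $1-T(z)\weight''(T(z))$ to a power at most $3k-1$, i.e.\ a singular exponent at most $(3k-1)/2$, making this family smaller by a factor $\bigO(n^{-1/2})$. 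Without this (or an equivalent) argument, the extension of the asymptotics to simple hypergraphs remains unproved in your proposal.
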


\begin{proof}
  Injecting the Puiseux expansion of the generating function of rooted trees
  \[
    T(z) \sim \tau - \tau \sqrt{\frac{2}{\gamma}} \sqrt{1-z/\rho}
  \]
  in the expression of $\chp_k(z)$ derived in Lemma~\ref{th:gfkernel}, we obtain
  \[
    \chp_k(z) \sim c_k \left( \frac{\sqrt{\gamma}}{2^{3/2} \tau} \right)^k (1-z/\rho)^{-3k/2}.
  \]
  The asymptotic enumeration result follows by singularity analysis
  \cite[Theorem~VI.4]{FS09}.

  We now prove that the result holds for connected simple hypergraphs.
  As shown in the first part of the proof,
  we can restrict our investigation to hypergraphs with clean kernels.
  Among them, let us consider the set of connected hypergraphs with excess~$k$
  that are not simple. 
  Each one contains a loop or two edges of size~$2$ linking the same vertices.
  Therefore, the kernel of each of them has at least
  one edge of size~$2$ 
  that is not replaced by a (non-empty) path of threes in the hypergraph.
  It follows that the generating function of those hypergraphs
  has a denominator~$1 - T(z) \weight''(T(z))$ at a power at most~$3k-1$,
  so the cardinality of this set is negligible
  compared to the number of connected hypergraphs with excess~$k$.

  Another and more intuitive way to understand it
  is that at fixed excess, adding more and more vertices
  into a kernel, the chances that an edge of size~$2$ 
  does not break into a non-empty path of threes are negligible.
\end{proof}

To derive a complete asymptotic expansion of connected hypergraphs,
one needs to take into account non-clean kernels.
For any fixed~$k$, 
one can enumerate all the kernels of excess~$k$
(since it is a finite set),
then apply the substitution described in the previous proof
to obtain the generating function of all connected hypergraphs
of excess~$k$, from which a complete asymptotic expansion follows.
Although computable, this construction is heavy.
The purely analytic approach of~\cite{FSS04},
that addresses this problem for graphs,
may allow a simpler expression.

The asymptotic enumeration of connected hypergraphs in~$\HP_{n,k}$
when~$k$ tends toward infinity is more challenging.
Since the original result for graphs from Bender, Canfield and McKay~\cite{BCM90},
other proofs have been proposed by Pittel and Wormald~\cite{PW05} 
and van der Hofstad and Spencer~\cite{HS06},
which may be generalized to hypergraphs.

    \section{Structure of Hypergraphs in the Critical Window} \label{sec:hpcomplex}

The next theorem describes the structure of hypergraphs
with an excess at or close to the critical value~$k = (\crit - 1) n$
introduced in Theorem~\ref{th:shp}.
It generalizes Theorem~$5$ of~\cite{JKLP93} about graphs.
Interestingly, the result at the critical excess
does not depend on the~$(\omega_t)$.

\begin{theorem} \label{th:complexe}
  Let~$\Psi(z)$, $\tau$, $\crit$ and~$\gamma$
  be defined as in Theorem~\ref{th:shp}.
  Let~$r_1, \ldots, r_q$ denote a finite sequence of integers
  and~$r = \sum_{t=1}^q t\, r_t$, then
  the limit of the probability for a hypergraph or simple hypergraph 
  with~$n$ vertices and global excess~$k = (\crit - 1) n + \bigO(n^{1/3})$ 
  to have exactly~$r_t$ components of excess~$t$ for~$t$ from~$1$ to~$q$ is
  \begin{equation} \label{eq:probar}
    \left(\frac{4}{3} \right)^r  
    \frac{r!}{(2r)!}
    \sqrt{\frac{2}{3}}
    \frac{c_1^{r_1}}{r_1!} \frac{c_2^{r_2}}{r_2!} \cdots \frac{c_q^{r_q}}{r_q!}.
  \end{equation}
  where the~$(c_i)$ are defined as in Theorem~\ref{th:kernel}.
  For~$k = (\crit - 1) n + x n^{2/3}$ and~$x$ bounded, the limit of this probability is
  \[
    3^{-r} 
    \frac{c_1^{r_1}}{r_1!} \frac{c_2^{r_2}}{r_2!} \cdots \frac{c_q^{r_q}}{r_q!}
    \sqrt{\frac{3 \pi}{2}} 
    \exp\left({\frac{-x^3 \gamma}{6(1-\crit)^3}} \right)
    G \left( \frac{3}{2}, \frac{1}{4} + \frac{3 r}{2} ; - \frac{3^{2/3} \gamma^{1/3} x}{2 (1 - \crit)} \right).
  \]
\end{theorem}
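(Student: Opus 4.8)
The plan is to follow the proof of Theorem~\ref{th:shp}, but to peel off the complex components before applying the coalescence theorem. A hypergraph of $\HP_{n,k}$ with exactly $r_t$ complex components of excess $t$ (for $t=1,\dots,q$) decomposes uniquely as a set of these complex components together with a set of trees and a set of unicyclic components. A connected hypergraph of excess $t$ has generating function $\chp_t(z)$ (Lemma~\ref{th:gfkernel}), a set of unicyclic components has generating function $e^{V(z)}$, and a set of $j$ unrooted trees has generating function $U(z)^j/j!$. Since complex components carry total excess $r=\sum_t t\,r_t$, unicyclic components carry excess $0$, and each tree carries excess $-1$, the number of trees is forced to be $j=r-k$ (this matches the bookkeeping $F_{n,k}(u)=n![z^n]\frac{U(z)^{-k}}{(-k)!}e^{uV(z)}$ from Theorem~\ref{th:forest}). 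Hence the number of such hypergraphs is
\[
  n![z^n]\left(\prod_{t=1}^q \frac{\chp_t(z)^{r_t}}{r_t!}\right)\frac{U(z)^{r-k}}{(r-k)!}\,e^{V(z)}.
\]

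Next I would insert the Newton--Puiseux expansions at the common singularity $\rho=\tau e^{-\weight'(\tau)}$ already computed in the proofs of Theorems~\ref{th:shp} and~\ref{th:connected}. Treating $U(z)$ as the large power, its expansion $U(z)=\tau(1-\crit)-\tau(1-z/\rho)+\frac{2\tau}{3}\sqrt{2/\gamma}\,(1-z/\rho)^{3/2}+\bigO((1-z/\rho)^2)$ puts us in the notation of Theorem~\ref{th:coal} with $\sigma=\tau(1-\crit)$, $h_1=\tau$, and $\lambda=3/2$. The remaining factor is analytic off $\rho$ and singular there like
\[
  \left(\prod_t \frac{\chp_t(z)^{r_t}}{r_t!}\right)e^{V(z)}
  \sim
  \left(\prod_t \frac{c_t^{r_t}}{r_t!}\right)
  \left(\frac{\sqrt\gamma}{2^{3/2}\tau}\right)^{r}
  (2\gamma)^{-1/4}(1-z/\rho)^{-\mu},
\]
where $\mu=\tfrac14+\tfrac{3r}{2}$, since $\chp_t(z)\sim c_t(\sqrt\gamma/(2^{3/2}\tau))^t(1-z/\rho)^{-3t/2}$ and $e^{V(z)}\sim(2\gamma)^{-1/4}(1-z/\rho)^{-1/4}$. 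The exponent $\mu=\tfrac14+\tfrac{3r}{2}$ is exactly the second argument of $G$ in the statement, and $\lambda=3/2$ its first; this is the structural reason the answer is carried by $G(3/2,\tfrac14+\tfrac{3r}{2};\cdot)$.

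I would then replace the analytic prefactor by this leading singular term and apply Theorem~\ref{th:coal} with power $r-k=\frac{\sigma}{h_1}n+\tilde x\,n^{2/3}$, where $\tilde x=-x+\smallo(1)$ because $k=(\crit-1)n+xn^{2/3}$ and $\sigma/h_1=1-\crit$. Dividing the resulting estimate by the count $\hp_{n,k}$ recalled in the proof of Corollary~\ref{th:probnocomplex} cancels all the factors $n^{n+k}$, $\rho^{-n}$, $e^{\weight(\tau)n/\tau}$ as well as the subexponential factor $e^{-x^2 n^{1/3}/(2(1-\crit))}$, and the same cubic cancellation as in Corollary~\ref{th:probnocomplex} produces the factor $\exp(-x^3\gamma/(6(1-\crit)^3))$, leaving the near-critical probability in the statement after collecting the powers of $\gamma$, $\tau$ and $2$. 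For $k=(\crit-1)n+\bigO(n^{1/3})$ the argument of $G$ tends to $0$; using $G(\tfrac32,\mu;0)=\frac{2}{3\pi}\sin(\pi\tfrac{1-\mu}{\lambda})\Gamma(\tfrac{1-\mu}{\lambda})$ with $\tfrac{1-\mu}{\lambda}=\tfrac12-r$, together with $\Gamma(\tfrac12-r)=(-1)^r4^r r!\sqrt\pi/(2r)!$, collapses the expression to $(4/3)^r\frac{r!}{(2r)!}\sqrt{2/3}\prod_t c_t^{r_t}/r_t!$. The simple-hypergraph case is identical after substituting $V(z)$ by $V(z)-\frac{T\weight''(T)}{2}-\frac{\omega_2^2T^2}{4}$, as in Theorems~\ref{th:forest} and~\ref{th:shp}, the extra bounded factor cancelling in the ratio.

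The main obstacle is the replacement of $\prod_t\chp_t(z)^{r_t}\cdot e^{V(z)}$ by its dominant singular part inside the coalescent-saddle-point integral: one must show that the lower-order Newton--Puiseux terms of the $\chp_t$ and of $e^{V}$ (including the non-clean-kernel contributions of $\chp_t$, which by Lemma~\ref{th:gfkernel} carry a strictly smaller power of $(1-z/\rho)^{-1}$) contribute only to lower-order terms of the final asymptotics, uniformly along the Hankel contour of Theorem~\ref{th:coal}. This is the analogue of the transfer step of ordinary singularity analysis, but carried out in the regime where the two saddle points coalesce; I expect it to require the same uniform estimates that justify Theorem~\ref{th:coal}, applied to each correction term in turn.
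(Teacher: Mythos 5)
Your proposal is correct and takes essentially the same route as the paper's own proof: the same decomposition into complex components, unicyclic components and trees yielding $n!\,[z^n]\,\frac{U(z)^{r-k}}{(r-k)!}\,e^{V(z)}\prod_{t}\frac{\chp_t(z)^{r_t}}{r_t!}$, the same application of Theorem~\ref{th:coal} with singular exponent $\mu=\frac14+\frac{3r}{2}$, the same division by $\hp_{n,k}$, and the same evaluation $G\left(\frac32,\frac14+\frac{3r}{2};0\right)=\frac{2}{3\sqrt{\pi}}\frac{4^r r!}{(2r)!}$ for the case $k=(\crit-1)n+\bigO(n^{1/3})$. The paper's proof is in fact terser (it delegates all computations to ``the same as in Theorem~\ref{th:shp}''), so your filled-in details, including the explicit identification of $\sigma$, $h_1$, $h_{3/2}$ and the transfer caveat for the lower-order Puiseux terms, are a careful expansion of exactly the argument the paper gives.
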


\begin{proof}
  Let~$\chp_k(z)$ denote the generating function 
  of connected hypergraphs of excess~$k$. 
  From Theorem~\ref{th:connected}, 
  when~$z$ tends towards the dominant singularity~$\rho$ of~$T(z)$,
  \[
    \chp_k(z) 
  \sim 
    c_k\left( \frac{\sqrt{\gamma}}{2^{3/2} \tau} \right)^k (1 - z/\rho)^{-3k/2}.
  \]
  The sum of the weights of hypergraphs with global excess~$k$
  and~$r_t$ components of excess~$t$ is
  \[
    n! [z^n] \frac{U^{r - k}}{(r-k)!} e^V 
    \frac{\chp_1(z)^{r_1}}{r_1!} \frac{\chp_2(z)^{r_2}}{r_2!} \ldots \frac{\chp_q(z)^{r_q}}{r_q!}
  \]
  and an application of Theorem~\ref{th:coal} ends the proof,
  with~$G(3/2,1/4 + 3 r/2; 0) = \frac{2}{3\sqrt{\pi}} \frac{4^r r!}{(2r)!}$.
  Those computations are the same as in Theorem~\ref{th:shp}.
\end{proof}

We have derived the limit of the probability for the \emph{structure} of a random hypergraph,
\textit{i.e.} the number of components of each finite excess.
However, would the hypergraph contain a component 
with an excess going to infinity with the number $n$ of vertices,
those limit of probabilities could not capture it.
Therefore, we now need to prove that this situation has a zero limit probability.
To do so, we prove that the limit of probabilities we derived
form in fact a probability distribution, meaning that they sum to $1$.
In~\cite[p.~$52$]{JKLP93}, the authors prove 
that the sum over all $r_1, r_2, \ldots \geq 0$
of Expression~\eqref{eq:probar} is equal to~$1$.
This corresponds to the special case~$x=0$ of the next theorem.

\begin{theorem}
    We consider a random hypergraph in $\HP_{n,k}$ or $\NMHP_{n,k}$
    with $k = (\Lambda - 1) n + x n^{2/3}$ and $x$ bounded.
    With high probability, all its components have a finite excess.
\end{theorem}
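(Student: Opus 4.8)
The plan is to show that the total probability over all finite structures sums to $1$, which forces the probability of any component with unbounded excess to vanish. The key point is that Theorem~\ref{th:complexe} gives, for each fixed structure $(r_1,\ldots,r_q)$, the limit probability
\[
  3^{-r} \prod_{t} \frac{c_t^{r_t}}{r_t!}
  \sqrt{\frac{3 \pi}{2}}
  \exp\left({\frac{-x^3 \gamma}{6(1-\crit)^3}} \right)
  G \left( \frac{3}{2}, \frac{1}{4} + \frac{3 r}{2} ; - \frac{3^{2/3} \gamma^{1/3} x}{2 (1 - \crit)} \right),
\]
and I would prove that summing this over all sequences $(r_1,r_2,\ldots)$ of nonnegative integers with finite support yields exactly $1$. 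Since each summand is a nonnegative limit probability of a disjoint event, their sum is at most $1$; equality rules out any escaping mass and hence forces the probability of an unbounded-excess component to tend to $0$.

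First I would reorganize the sum by the total weighted excess $r = \sum_t t\, r_t$. Grouping terms with the same $r$, the combinatorial factor $\prod_t c_t^{r_t}/r_t!$ summed over all $(r_t)$ with $\sum_t t\, r_t = r$ is precisely $[z^{r}] \exp\left(\sum_{t\geq 1} c_t z^{t}\right)$. Recalling from Lemma~\ref{th:kernel} that $c_k = [z^{2k}] \log \sum_n e_n z^{2n}$, the exponential of the $c_t$ generating series collapses to the generating function $\sum_n e_n z^{2n}$ of cubic multigraphs (at the level of formal power series in the excess variable). The upshot is that $\sum_{(r_t)} \prod_t c_t^{r_t}/r_t!$, collected by $r$, reproduces the coefficients $e_r$, so after this regrouping the sum over structures becomes a single sum over $r$ with coefficient $e_r$ and the $r$-dependent factor $3^{-r} \sqrt{3\pi/2}\, \exp(-x^3\gamma/(6(1-\crit)^3))\, G(3/2, 1/4 + 3r/2; \cdot)$.

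The remaining step is an analytic identity: I would show that
\[
  \sqrt{\frac{3 \pi}{2}}
  \exp\left({\frac{-x^3 \gamma}{6(1-\crit)^3}} \right)
  \sum_{r \geq 0} e_r\, 3^{-r}\,
  G \left( \frac{3}{2}, \frac{1}{4} + \frac{3 r}{2} ; - \frac{3^{2/3} \gamma^{1/3} x}{2 (1 - \crit)} \right)
  = 1.
\]
The natural route is to recognize the left-hand side as the limit probability, computed directly via Theorem~\ref{th:coal} with $H(z) = U(z)$ and $\mu$ absorbing the factor $e^{V(z)}\sum_k e_k (\gamma/(2^{3/2}\tau)^2)^k (1-z/\rho)^{-3k}$, that a random hypergraph has \emph{no} component of unbounded excess. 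Equivalently, I would feed the full generating function $\sum_{r}(\text{all hypergraphs with total finite structure of weighted excess }r)$ back into the singularity analysis of Theorem~\ref{th:shp}/\ref{th:complexe} and check it reproduces $\hp_{n,k}$ from Theorem~\ref{th:hp}; since the normalizing constant in those theorems is $\hp_{n,k}$ itself, the ratio is $1$. The special case $x=0$ reduces to the Bessel-type identity $\sum_r e_r 3^{-r} \frac{4^r r!}{(2r)!} \cdot \frac{2}{3\sqrt\pi} = \sqrt{2/3}^{\,-1}$, which is exactly what Janson, Knuth, \L{}uczak and Pittel verify on~p.~$52$ of~\cite{JKLP93}.

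The main obstacle is justifying the interchange of the limit $n \to \infty$ with the infinite summation over $r_1,r_2,\ldots$: the per-structure convergence from Theorem~\ref{th:complexe} is only established for each \emph{fixed} finite structure, so I need a uniform tail bound to upgrade termwise convergence into convergence of the sum, and thereby rule out mass escaping to components of slowly growing excess. I would obtain this domination from the asymptotics of $c_k$ (equivalently $e_k \sim \frac{(6k)!}{(3!)^{2k} 2^{3k}(3k)!(2k)!}$, whose growth is controlled by Stirling) together with the uniform bounds on $G(3/2, 1/4 + 3r/2; x)$ as $r \to \infty$, ensuring the series $\sum_r e_r 3^{-r} G$ converges and is dominated uniformly in $n$. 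Once this uniformity is in place, the equality of the sum with $1$ closes the argument and the \emph{with high probability} conclusion follows immediately.
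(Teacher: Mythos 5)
Your reduction is exactly the paper's: keep the limit probabilities of Theorem~\ref{th:complexe}, note that at finite $n$ the structure probabilities sum to $1$ exactly, regroup the sum over $(r_1,\ldots,r_q)$ by $r=\sum_t t\,r_t$ via the exponential formula so that $\prod_t c_t^{r_t}/r_t!$ collapses to $e_r$ (using $\sum_i c_i z^{2i}=\log\sum_n e_n z^{2n}$ from Lemma~\ref{th:kernel}), and reduce everything to the one-variable identity $\sqrt{3\pi/2}\;e^{-x^3\gamma/(6(1-\crit)^3)}\sum_{r\ge 0}e_r\,3^{-r}\,G\left(\tfrac32,\tfrac14+\tfrac{3r}{2};\cdot\right)=1$. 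All of that is correct and matches the paper, as does the Scheff\'e-type logic that equality of the sum with $1$ rules out escaping mass. The gap is in how you propose to prove this identity for general $x$.

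Your ``natural route''---invoking Theorem~\ref{th:coal} with $H(z)=U(z)$ and a $\mu$ ``absorbing'' the factor $e^{V(z)}\sum_k e_k(\cdot)^k(1-z/\rho)^{-3k}$---is not a valid application of that theorem: it requires a \emph{fixed} exponent $\mu$, whereas your factor is an infinite sum of increasingly singular terms, and that sum diverges for every $z\neq\rho$, because $e_k=\frac{(6k)!}{(3!)^{2k}2^{3k}(3k)!(2k)!}$ grows like $(3/2)^k k^k e^{-k}$ up to polynomial factors, so $\sum_k e_k w^k$ has radius of convergence $0$. The sum over structures only becomes finite \emph{after} coefficient extraction, where the superexponential growth of $e_r$ is cancelled by the $1/\Gamma$-type decay produced by the Hankel integral; hence this step silently interchanges the limit $n\to\infty$ with the infinite sum over structures, which is precisely what is at stake---the argument is circular. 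Your fallback, dominated convergence, is the honest version of this route, but it requires bounds on the finite-$n$ structure probabilities that are uniform in $n$ and summable in $r$, including for $r$ growing with $n$; since the error terms in Theorem~\ref{th:coal} depend on $\mu=\tfrac14+\tfrac{3r}{2}$, this is a substantial uniform saddle-point analysis that ``Stirling on $e_k$ plus bounds on $G$'' does not supply. The paper sidesteps all of this by proving the identity directly: it rewrites $G$ via the complement formula as $F_r(x)=\sum_{k\ge 0}x^k/\bigl(k!\,\Gamma(\tfrac12+r-\tfrac{2k}{3})\bigr)$, evaluates the case $x=0$ as the hypergeometric value ${}_2F_1\bigl(\tfrac16,\tfrac56,\tfrac12;\tfrac12\bigr)=\sqrt{3/2}$, and uses creative telescoping (Koutschan's package~\cite{Kpackage}) to show that $\sum_{r}3^{-r}e_r F_r(x)$ is annihilated by the operator $9\partial-4x^2$, hence equals $\sqrt{3/(2\pi)}\,e^{4x^3/27}$. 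To complete your proof you must either carry out the uniform estimates in full, or prove the identity by independent analytic means as the paper does.
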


\begin{proof}
According to Theorem~\ref{th:complexe}, 
the theorem is proved once the following equality is established
\[
	\sum_{q \geq 0}
	\sum_{r_1, \ldots, r_q \geq 0}
	3^{-r}
	\frac{c_1^{r_1}}{r_1!} \cdots \frac{c_q^{r_q}}{r_q!}
	\sqrt{\frac{3 \pi}{2}}
	e^{\frac{-x^3 \gamma}{6(1-\Lambda)^3}}
	G\left( \frac{3}{2}, \frac{1}{4} + \frac{3 r}{2};
		- \frac{3^{2/3} \gamma^{1/3} x}{2 (1-\Lambda)}
	\right) = 1,
\]
where $r = \sum_{t=1}^q t\, r_t$.
We rescale the variable~$x$ and replace the function~$G$ with
\[
	G\left( \frac{3}{2}, \frac{1}{4} + \frac{3 r}{2};
		- \frac{3^{2/3} \gamma^{1/3} x}{2 (1-\Lambda)}
	\right)
	=
	\frac{2}{3}
	\sum_{k \geq 0}
	\frac{1}{\Gamma\left( \frac{1}{2} + r - \frac{2 k}{3}\right)}
	\frac{x^k}{k!},
\]
an expression derived using, on the definition of~$G$ from Theorem~\ref{th:coal}, 
the complement formula for the Gamma function
\[
    \Gamma(s) \Gamma(1-s) = \frac{\pi}{\sin(\pi s)}.
\]
The equality we want to prove becomes
\begin{align} \label{eq:telescoping}
	&\sum_{q \geq 0}
	\sum_{r_1, \ldots, r_q \geq 0}
	3^{-r}
	\frac{c_1^{r_1}}{r_1!} \cdots \frac{c_q^{r_q}}{r_q!}
	F_r(x)
	=
	\sqrt{\frac{3}{2 \pi}}
	e^{4 x^3 / 27},
	\\
	\text{where }\quad &
	F_r(x) =
	\sum_{k \geq 0}
	\frac{1}{\Gamma\left( \frac{1}{2} + r - \frac{2 k}{3}\right)}
	\frac{x^k}{k!}. \nonumber
\end{align}
To simplify this expression further,
we observe that the coefficient extraction 
in the exponential of a generating function 
has a familiar shape
\[
    [z^r] e^{\sum_{i \geq 1} c_i z^i} =
    \sum_{q \geq 1\ }
    \sum_{r_1 + 2 r_2 + \cdots + q r_q = r\ }
    \prod_{i=1}^q \frac{c_i^{r_i}}{r_i !}.
\]
The generating function of the $(c_i)$ is characterized by
\[
    \sum_{i \geq 1} c_i z^{2i} = 
    \log \Biggl( \sum_{r \geq 0} e_r z^{2 r} \Biggr),
\]
where the value of $e_r$ is
\[
    e_r = \frac{(6r)!}{(3!)^{2r} 2^{3r} (3r)! (2r)!}.
\]
Therefore,
\[
	\sum_{q \geq 0}
	\sum_{r_1, \ldots, r_q \geq 0}
	3^{-r}
	\frac{c_1^{r_1}}{r_1!} \cdots \frac{c_q^{r_q}}{r_q!}
	F_r(x)
	=
	\sum_{r \geq 0}
	3^{-r}
	e_r
	F_r(x)
\]
and Equation~\eqref{eq:telescoping}, which we want to prove, is equivalent with
\begin{equation} \label{eq:telescopingdeux}
	\sum_{r \geq 0}
	3^{-r}
	e_r
	F_r(x)
	=
	\sqrt{\frac{3}{2 \pi}}
	e^{4 x^3 / 27}.
\end{equation}

We first prove the theorem in the particular case $x=0$,
which corresponds to $k = (\Lambda - 1) n$.
Since we have
\[
	F_r(0) 
	= 
	\frac{1}{\Gamma(r + 1/2)} 
	= 
	\frac{4^r}{\sqrt{\pi}}
	\frac{r!}{(2r)!},
\]
Equation~\eqref{eq:telescopingdeux} becomes
\[
	\sum_{r \geq 0}
	3^{-r}
	\frac{(6r)!}{(3!)^{2r} 2^{3r} (3r)! (2r)!}
	4^r
	\frac{r!}{(2r)!}
	=
	\sqrt{\frac{3}{2}}.
\]
Let~$u_r$ denote the summand of the left side, then
\[
    \frac{u_{r+1}}{u_r} =
    \frac{(r + \frac{1}{6}) (r + \frac{5}{6})}{(r+ \frac{1}{2}) (r+1)} 
    \frac{1}{2}.
\]
It follows that the sum is equal to the evaluation of a hypergeometric function 
at the point~$1/2$
\[
    \sum_{r \geq 0}
	3^{-r}
	\frac{(6r)!}{(3!)^{2r} 2^{3r} (3r)! (2r)!}
	4^r
	\frac{r!}{(2r)!} 
	= 
	{}_2\!F_1\left(\frac{1}{6}, \frac{5}{6}, \frac{1}{2}; \frac{1}{2} \right).
\]
A special identity of the hypergeometric function states
\[
    {}_2\!F_1(a, 1-a, c; 1/2) =
    \frac{\Gamma(c/2) \Gamma((c+1)/2)}{\Gamma((a+c)/2) \Gamma((1+c-a)/2)},
\]
which reduces, for~$a=1/6$ and~$c=1/2$, to
\[
	{}_2\!F_1\left(\frac{1}{6}, \frac{5}{6}, \frac{1}{2}; \frac{1}{2} \right)
	= \sqrt{\frac{3}{2}}
\]
and achieves the proof of the theorem in the particular case $x=0$.

We use computer algebra to solve the general case,
specifically Koutschan's\footnote{We thank Christoph Koutschan for his help.} Mathematica package~\cite{Kpackage}.
Let~$\Delta_r$ denote the operator on sequences
\[
	\Delta_r(u_r) = u_{r+1} - u_r
\]
and~$\partial$ the differential operator with respect to~$x$.
We first apply the creative telescoping algorithm (see~\cite{K10} for more details) 
to the sum
\[
	F_r(x) =
	\sum_{k \geq 0}
	\frac{1}{\Gamma\left( \frac{1}{2} + r - \frac{2 k}{3}\right)}
	\frac{x^k}{k!}
\]
It computes pairs of operators of the form
\[
	\left( P(x, r, \partial, \Delta_r), Q(x, r, k, \partial, \Delta_r, \Delta_k) \right)
\]
such that
\[
	P \cdot \left( 
		\frac{1}{\Gamma\left( \frac{1}{2} + r - \frac{2 k}{3}\right)}
		\frac{x^k}{k!}
	\right)
	+
	\Delta_k Q \cdot
	\left( 
		\frac{1}{\Gamma\left( \frac{1}{2} + r - \frac{2 k}{3}\right)}
		\frac{x^k}{k!}
	\right)
	= 0.
\]
Since~$P$ is independent of~$k$, it commutes with a summation over~$k$
and we obtain
\[
	P \cdot F_r(x) + 
	\left[ 
		Q \cdot
		\Bigg( 
			\frac{1}{\Gamma\left( \frac{1}{2} + r - \frac{2 k}{3}\right)}
			\frac{x^k}{k!}
		\Bigg)
	\right]_{k=0}^{+ \infty} = 0
\]
We then check that the right side of the sum cancels for each operator~$Q$
returned by the algorithm.
The family $(P)$ is a Gr\"obner basis for the ideal of Ore operators 
in~$\Delta_r$ and~$\partial$ that cancels~$F_r(x)$.
From there, using again Koutschan's package, 
we compute a Gr\"obner basis for the ideal of Ore operators that cancels
\[
	3^{-r} e_r F_r(x).
\]
We then apply the creative telescoping algorithm with this basis as input.
The output is a pair of operators
\[
	\left(
		9 \partial - 4 x^2,
		Q(x,r, \partial, \Delta_r)
	\right)
\]
such that
\[
	\left( 9 \partial - 4 x^2\right) \cdot 
		\Bigg( \sum_{r \geq 0} 3^{-r} e_r F_r(x)) \Bigg)
	+ \Bigg[
		Q \cdot \left( 3^{-r} e_r F_r(x) \right)
	\Bigg]_{r=0}^{+\infty}
	= 0.
\]
After verification that the right side of the sum cancels, 
we conclude that the operator~$9 \partial - 4 x^2$ 
cancels~$\sum_{r \geq 0} 3^{-r} e_r F_r(x))$.
All solutions of this differential equation have the form
\[
	C e^{4 x^3/27}
\]
and the initial condition corresponding to~$x=0$
fixes the constant~$C = \sqrt{3/(2 \pi)}$.
This proves Equation~\eqref{eq:telescopingdeux}.
\end{proof}

As a corollary, the previous theorem implies
that Theorem~\ref{th:complexe} holds true for unbounded~$q$.

    \section{Birth of the Giant Component} \label{sec:giant}

Erd\H{o}s and R\'enyi~\cite{ER60} analyzed
random graphs with a large number $n$ of vertices
and $m$ edges such that $m/n$ tends toward a constant $c$.
They proved that when $c$ is strictly greater than $1/2$,
with high probability the graph has a \emph{giant component},
which contains a constant fraction of the vertices
and has an excess going to infinity with $n$.
Similar results have been derived for various models of hypergraphs
\cite{SS85}, \cite{KL02}, \cite{Co04}, \cite{DM08}, \cite{GZVCN09}.
%

%

We consider random hypergraphs with $n$ vertices
and excess $k = (\lambda - 1) n$.
We have seen in Theorem~\ref{th:forest}
that when $\lambda < \Lambda$,
with high probability the hypergraph
contains only trees and unicyclic components.
We also have derived the limit distribution of the excesses
of the components in the critical window, \textit{i.e.}
for hypergraphs with excess $k = (\Lambda - 1) n + x n^{2/3}$ with $x$ bounded.
In this section, we investigate the case $\lambda > \Lambda$.
However, we will not derive results on the giant component as precise
as Erd\H{o}s and R\'enyi~\cite{ER60} did for graphs, using different tools.
We first prove that, with high probability,
the hypergraph contains a component with unbounded excess.
We conjecture that this component is unique and contains 
a constant fraction of the vertices.
Therefore, we refer to it as \emph{the giant component}.

\begin{theorem}
    With the notations of Theorem~\ref{th:forest},
    let us consider a random hypergraph
    with $n$ vertices and excess $k = (\lambda - 1) n$,
    where $\lambda$ is strictly greater than $\Lambda$.
    For any fixed $K$, the limit probability that all its components
    have excess smaller or equal to $K$ is zero.
\end{theorem}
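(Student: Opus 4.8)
The plan is to show that the event ``every component has excess at most $K$'' has vanishing probability by bounding the corresponding weighted count against $\hp_{n,k}$, whose exact asymptotics are supplied by Theorem~\ref{th:hp}. First I would set up the generating function. Marking excess by $y$ and using the component decomposition together with Lemma~\ref{th:TUV} and Theorem~\ref{th:connected}, the weight of the hypergraphs in $\HP_{n,k}$ whose components all have excess at most $K$ is
\[
  \hp^{\le K}_{n,k} = n!\,[z^n y^k]\, \exp\!\Big( \tfrac{U(z)}{y} + V(z) + \sum_{j=1}^{K} y^j \chp_j(z) \Big),
\]
while the same expression with the sum extended to all $j\ge 1$ equals $\hp_{n,k}$; hence the target probability is exactly $\hp^{\le K}_{n,k}/\hp_{n,k}$. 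The crucial structural fact is that $U$, $V$ and every $\chp_j$ are analytic on $|z|<\rho$ and singular at $\rho=\tau e^{-\weight'(\tau)}$ (that is, at $T=\tau$): a bounded truncation cannot push the singularity past $\tau$. Since all coefficients are nonnegative, Cauchy's inequality gives, for every $0<r<\rho$ and $\eta>0$,
\[
  \hp^{\le K}_{n,k} \le n!\, r^{-n} \eta^{-k}\, \exp\!\Big( \tfrac{U(r)}{\eta} + V(r) + \sum_{j=1}^{K} \eta^j \chp_j(r) \Big).
\]

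Two regimes then appear. When $\lambda>1$ (so $k>0$) I would take $\eta=n^{1/K}$ and $r$ fixed below $\rho$: the penalty $\eta^{-k}=\exp(-\tfrac{\lambda-1}{K}\, n\log n)$ already lowers the leading term to $n^{(1-(\lambda-1)/K)n}$, whereas Theorem~\ref{th:hp} gives $\hp_{n,k}=n^{\lambda n+\smallo(n)}$, so the ratio tends to $0$ at the $n\log n$ scale. When $\Lambda<\lambda\le 1$ (so $k\le 0$) I would instead take $\eta\sim c/n$; then $\sum_{j=1}^{K}\eta^j\chp_j(r)\to 0$, and after optimizing $c$ and setting $t=T(r)\in(0,\tau)$ the bound reduces to the exponential rate of the forest-and-unicyclic count $\shp_{n,k}$ of Theorem~\ref{th:forest}. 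For $\lambda>\Lambda$ the two saddle points $\tau$ and $\zeta$ of that analysis satisfy $\tau<\zeta$ (since $\Psi$ is increasing and $\Psi(\tau)=\Lambda<\lambda=\Psi(\zeta)$), and $\tau$ is the dominant one, so $\hp^{\le K}_{n,k}$ inherits the rate $a(\lambda)$ governed by $\tau$, while $\hp_{n,k}$ has the rate $b(\lambda)=\weight(\zeta)/\zeta-\lambda\log\zeta=\max_{y>0} g(y)$, with $g(y)=\weight(y)/y-\lambda\log y$, attained at $\zeta>\tau$.

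The hard part is the strict inequality $a(\lambda)<b(\lambda)$ on $(\Lambda,1)$. The inclusion $\shp_{n,k}\le\hp_{n,k}$ already gives $\le$; strictness I would obtain by comparing the two rates as functions of $\lambda$. Using the critical identities $\tau\weight''(\tau)=1$, $\Psi(\tau)=\Lambda$ and $\tau\Psi'(\tau)=\tau\weight''(\tau)-\Psi(\tau)=1-\Lambda$, one checks that $a$ and $b$ agree, together with their first and second derivatives, at $\lambda=\Lambda$, whereas their third derivatives differ by $\frac{2+\tau\Psi''(\tau)/\Psi'(\tau)}{(1-\Lambda)^2}>0$, the positivity coming from the nonnegative coefficients of $\Psi$; hence $b-a>0$ for $\lambda$ just above $\Lambda$. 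Away from $\Lambda$ the sign of $b-a$ reduces to the explicit inequality $\zeta(1-\lambda)<(1-\Lambda)\tau$ (equivalently $\tfrac{d}{d\lambda}(b-a)>0$), which is the main point to verify in general and the principal obstacle of the proof. Combining the three estimates yields $\hp^{\le K}_{n,k}/\hp_{n,k}\to 0$ for every fixed $K$, so with high probability some component has excess exceeding $K$. The argument for $\NMHP_{n,k}$ is identical, with $V$ replaced by $V-\tfrac{T\weight''(T)}{2}-\tfrac{\omega_2^2 T^2}{4}$ as in Theorems~\ref{th:forest} and~\ref{th:shp}.
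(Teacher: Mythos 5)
Your proposal takes a genuinely different route from the paper. The paper classifies hypergraphs by the excess of their kernel, resolves the coalescence of the saddle point with the singularity of $1-t\Omega''(t)$ via the substitution $t=s\Phi_\lambda(t)$, and obtains \emph{two-sided} asymptotics $\hp^{(\ell,j)}_{n,k}\sim n^{n+k}R(\lambda)^n\,n^{j/2-3/4}\,C_{\ell,j,\lambda}$; it then sidesteps any rate inequality by observing that the resulting probability estimate can only tend to $0$ or to $+\infty$, and the latter is absurd for a probability. Your scheme (truncate the exponential formula at component excess $K$, bound the coefficient by Cauchy's inequality, compare exponential rates) is sound in its setup: the identity for $\hp^{\le K}_{n,k}$, the $\lambda>1$ regime, and the reduction of the $\Lambda<\lambda\le 1$ regime to the comparison $a(\lambda)<b(\lambda)$ all check out, and your optimized bound does reproduce the rate $\bigl(\tfrac{1-\Lambda}{1-\lambda}\bigr)^{1-\lambda}e^{\Lambda-\lambda}e^{\Omega(\tau)/\tau}\tau^{-\lambda}$, the same one the paper obtains. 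But the proof is genuinely incomplete exactly where you say it is: you establish $a<b$ only near $\lambda=\Lambda$ (the third-derivative computation is correct), and leave the global inequality as ``the principal obstacle.'' This gap cannot be waved away inside your framework: since Cauchy's inequality gives only an \emph{upper} bound on $\hp^{\le K}_{n,k}$, the paper's dichotomy fallback is unavailable to you — if $a\ge b$ your bound says nothing rather than producing a contradiction. As submitted, the theorem is proved only for $\lambda\in(\Lambda,\Lambda+\varepsilon)\cup(1,\infty)$.

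Fortunately, your own reduction closes the gap in two lines. You showed $\frac{d}{d\lambda}(b-a)=\log\frac{\tau(1-\Lambda)}{\zeta(1-\lambda)}$, so it suffices to prove $\zeta(1-\Psi(\zeta))<\tau(1-\Psi(\tau))$ whenever $\zeta>\tau$. Set $h(z)=z(1-\Psi(z))$; using $z\Psi'(z)=z\Omega''(z)-\Psi(z)$ one gets
\[
  h'(z)=1-\Psi(z)-z\Psi'(z)=1-z\Omega''(z),
\]
which is strictly negative for $z>\tau$ because $z\Omega''(z)$ has nonnegative coefficients, is increasing, and equals $1$ at $\tau$. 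Hence $h(\zeta)<h(\tau)$, i.e. $\zeta(1-\lambda)<\tau(1-\Lambda)$; so $b-a$ vanishes at $\lambda=\Lambda$ and is strictly increasing on $(\Lambda,1)$, hence positive there, and your three estimates combine to give the theorem. Two small repairs along the way: $b(\lambda)=\Omega(\zeta)/\zeta-\lambda\log\zeta$ is the \emph{minimum} of $g(y)=\Omega(y)/y-\lambda\log y$ over $y>0$, not the maximum ($g\to+\infty$ at both ends of $(0,\infty)$ and $\zeta$ is its unique critical point); and in the $\Lambda<\lambda\le 1$ regime the optimum of your bound over $t=T(r)$ is an infimum approached as $t\to\tau$, not attained, so you should fix $t<\tau$ close enough that the resulting rate lies below $a(\lambda)+\tfrac{1}{2}\bigl(b(\lambda)-a(\lambda)\bigr)$ — a step that is only legitimate once the strict inequality is in hand.
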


\begin{proof}
Let $\hp_{n,k}^{(\ell)}$ denote the number of hypergraphs 
with $n$ vertices, excess~$k$ and kernel of excess~$\ell$.
Since each tree has excess~$(-1)$, such a hypergraph contains
a set of $\ell-k$ trees, so
\[
  \hp_{n,k}^{(\ell)} = n! [z^n] 
  \frac{U(z)^{\ell-k}}{(\ell-k)!}
  e^{V(z)}
  \hpUV_\ell(z).
\]
We replace $V(z)$ with its expression
$\log\left( \frac{1}{1- T(z) \Omega''(T(z))} \right)$
and $\hpUV_{\ell}(z)$ with the expression derived in Lemma~\ref{th:gfkernel}
\[
  \hp_{n,k}^{(\ell)} = 
  \sum_{j = 0}^{3 \ell}
  \frac{n!}{(\ell-k)!} [z^n]
  \frac{U(z)^{\ell-k}}{
    (1-T(z) \Omega''(T(z)))^{j+\frac{1}{2}}}
  E_{\ell,j}(T(z)).
\]
We introduce the notation $\hp_{n,k}^{(\ell,j)}$ such that
$  \hp_{n,k}^{(\ell)} = 
  \sum_{j = 0}^{3 \ell}
  \hp_{n,k}^{(\ell,j)}$
\[
  \hp_{n,k}^{(\ell,j)} =
  \frac{n!}{(\ell-k)!} [z^n]
    \frac{U(z)^{-k}}{
      (1-T(z) \Omega''(T(z)))^{j+\frac{1}{2}}}
    U(z)^{\ell} E_{\ell,j}(T(z)).
\]
By definition of $\Psi(t) = \Omega'(t) - \frac{\Omega(t)}{t}$
and $U(z) = T(z) + \Omega(T(z)) - T(z) \Omega'(T(z))$,
we have $U(z) = (1 - \Psi(T(z))) T(z)$.
Therefore, using the Cauchy integral representation of the coefficient extraction,
we obtain that~$\hp_{n,k}^{(\ell,j)}$ is equal to
\[
  \frac{n!}{(\ell-k)!}
  \frac{1}{2 i \pi}
  \oint
  \frac{(1-\Psi(T(z)))^{-k} T(z)^{-k} }{(1-T(z) \Omega''(T(z)))^{j+\frac{1}{2}}}
  (1-\Psi(T(z)))^\ell T(z)^\ell E_{\ell,j}(T(z))
  \frac{dz}{z^{n+1}}.
\]
After the change of variable $t = T(z)$, this expression becomes
\[
  \hp_{n,k}^{(\ell,j)} =
  \frac{n!}{(\ell-k)!}
  \frac{1}{2 i \pi}
  \oint
  \frac{(1-\Psi(t))^{-k} e^{\Omega'(t) n}}{(1-t\Omega''(t))^{j - \frac{1}{2}}}
  (1-\Psi(t))^\ell t^\ell E_{\ell,j}(t)
  \frac{dt}{t^{n+k+1}}.
\]
Let us recall that $k = (\lambda - 1) n$, so we introduce the notation
\[
  \Phi_{\lambda}(t) =
  (1-\Psi(t))^{\frac{1-\lambda}{\lambda}}
  e^{\frac{\Omega'(t)}{\lambda}}
\]
in order to rewrite the expression of $\hp_{n,k}^{(\ell,j)}$
\[
  \hp_{n,k}^{(\ell,j)} =
  \frac{n!}{(\ell-k)!}
  \frac{1}{2 i \pi}
  \oint
  \frac{ \Phi_{\lambda}(t)^{n+k} }{ (1-t\Omega''(t))^{j - \frac{1}{2}} }
  (1-\Psi(t))^\ell t^\ell E_{\ell,j}(t)
  \frac{dt}{t^{n+k+1}}.
\]
At this point, it is tempting to use a saddle-point method
on a circle to extract the asymptotics.
The dominant saddle-point $\zeta$ would be characterized 
as the smallest root of
\[
  1 - \zeta \frac{\Phi_{\lambda}'(\zeta)}{\Phi_{\lambda}(\zeta)},
\]
which is equal with
\[
  (1-t\Omega''(t)) 
  \frac{1}{\lambda}
  \frac{\lambda - \Psi(t)}{1 - \Psi(t)}.
\]
Since $\lambda$ is greater than $\Lambda = \Psi(\tau)$, 
this saddle-point is equal to $\tau$,
and thus cancels the denominator $(1 - t \Omega''(t))$.
This is a case of coalescence of a saddle-point with a singularity.
To overcome this situation, 
following the example given in \cite[Note VIII.39]{FS09},
we apply the change of variable $t = s \Phi_\lambda(t)$
\[
  \hp_{n,k}^{(\ell,j)} =
  \frac{n!}{(\ell-k)!}
  \frac{1}{2 i \pi}
  \oint
  \frac{ 1 }{ (1-t\Omega''(t))^{j + \frac{1}{2}} }
  \frac{\lambda (1-\Psi(t))}{\lambda - \Psi(t)}
  (1-\Psi(t))^\ell t^\ell E_{\ell,j}(t)
  \frac{ds}{s^{n+k+1}},
\]
where $t$ is now considered as a function of $s$.
This integral can be interpreted as a coefficient extraction
\begin{equation} \label{eq:hpnklj}
  \hp_{n,k}^{(\ell,j)} =
  \frac{n!}{(\ell-k)!}
  [s^{n+k}]
  \frac{ 1 }{ (1-t\Omega''(t))^{j + \frac{1}{2}} }
  \frac{\lambda (1-\Psi(t))^{\ell+1} t^\ell }{\lambda - \Psi(t)}
  E_{\ell,j}(t).
\end{equation}
Applying~\cite[Theorem~VI.6]{FS09} to the expression~$t = s \Phi_{\lambda}(t)$,
we compute the first terms of the singular expansion of~$t(s)$
\[
	t(s) \sim
	\tau - d_1 \sqrt{1-s/\rho_2}
\]
where the values~$\rho_2$ and~$d_1$ are given by
\[
	\rho_2 = \frac{\tau}{(1-\Lambda)^{\frac{1-\lambda}{\lambda}} e^{\frac{\Omega(\tau)}{\lambda \tau} + \frac{\Lambda}{\lambda}}}, 
	\quad \text{ and} \quad
	d_1 = \sqrt{\frac{2 \Phi_{\lambda}(\tau)}{\Phi_{\lambda}''(\tau)}}.
\]
It follows that
\begin{align*}
	\frac{ 1 }{ (1-t\Omega''(t))^{j + \frac{1}{2}} }
  	&\frac{\lambda (1-\Psi(t))^{\ell+1} t^\ell }{\lambda - \Psi(t)}
	E_{\ell,j}(t)
	\sim\\
	&(1-s/\rho_2)^{-\left(\frac{j}{2} + \frac{1}{4}\right)}
	\left( \frac{\tau}{\gamma d_1} \right)^{j + \frac{1}{2}}
	\frac{\lambda (1-\Lambda)^{\ell+1} \tau^\ell }{\lambda - \Lambda} 
	E_{\ell,j}(\tau).
\end{align*}
In Equation~\eqref{eq:hpnklj}, we use Stirling formula to express
\[
	\frac{n!}{(-k)!} \sim
	\frac{n^{n+k}}{\sqrt{1-\lambda}}
	\frac{e^{-\lambda n}}{(1-\lambda)^{(1-\lambda) n}},
\]
and derive the asymptotics from the coefficient extraction 
using a singularity analysis \cite[Theorem VI.4]{FS09}
\[
	\hp_{n,k}^{\ell,j} \sim
	n^{n+k}
	\left(
		\left( \frac{1-\Lambda}{1-\lambda}\right)^{1-\lambda}
		e^{\Lambda - \lambda}
		\frac{e^{\frac{\Omega(\tau)}{\tau}}}{\tau^{\lambda}}
	\right)^n
	n^{\frac{j}{2} - \frac{3}{4}}
	C_{\ell, j, \lambda}
\]
where the value~$C_{\ell, j, \lambda}$ is positive and bounded with respect to~$n$.
We conclude that the dominant term in the sum
\[
	\hp_{n,k}^{\ell} = \sum_{j=0}^{3 \ell} \hp_{n,k}^{\ell,j}
\]
is~$\hp_{n,k}^{\ell, 3 \ell}$
and that the number of hypergraphs with all components with excess at most~$K$
is equivalent with
\[
	\sum_{\ell = 0}^{K} \hp_{n,k}^{\ell} \sim
	\hp_{n,k}^{K} \sim
	n^{n+k}
	\left(
		\left( \frac{1-\Lambda}{1-\lambda}\right)^{1-\lambda}
		e^{\Lambda - \lambda}
		\frac{e^{\frac{\Omega(\tau)}{\tau}}}{\tau^{\lambda}}
	\right)^n
	n^{\frac{3}{2}K - \frac{3}{4}}
	C_{K, 3K, \lambda}.
\]
The asymptotic probability for a hypergraph
to have all components of excess at most~$K$ is then
the previous value divided by the total number of hypergraphs,
derived in Theorem~\ref{th:hp}
\[
	\left(
		\left( \frac{1-\Lambda}{1-\lambda}\right)^{1-\lambda}
		e^{\Lambda - \lambda}
		\left( \frac{\zeta}{\tau} \right)^{\lambda}
		e^{\frac{\Omega(\tau)}{\tau} - \frac{\Omega(\zeta)}{\zeta}}
	\right)^n
	n^{\frac{3}{2} K - \frac{1}{4}}
	\sqrt{2 \pi (\zeta \Omega''(\zeta) - \lambda)}
	C_{K,3K,\lambda}
\]
where~$\zeta$ is characterized by~$\Psi(\zeta) = \lambda$.
There are two ways to prove that this quantity goes to zero when~$n$ is large.
The first one is to establish the inequality
\[
	\left( \frac{1-\Lambda}{1-\lambda}\right)^{1-\lambda}
		e^{\Lambda - \lambda}
		\left( \frac{\zeta}{\tau} \right)^{\lambda}
		e^{\frac{\Omega(\tau)}{\tau} - \frac{\Omega(\zeta)}{\zeta}}
	< 1
\]
using the inequalities~$\Lambda < \lambda$ and~$\tau < \zeta$.
This can be achieved by successive derivations of the logarithm of the expression.
The second and simpler one uses the observation that this quantity can only tend
to~$0$ or to~$+ \infty$. Since it is a probability, the second option is impossible.
\end{proof}

Molloy and Reed~\cite{MR95} and Newman, Strogatz and Watts~\cite{NSW01}
gave an intuitive explanation of the birth of the giant component
in graphs with known degree distribution\footnote{We thank an anonymous referee 
for those references.}.
Starting with a vertex, we can determine the component in which it lies
by exploring its neighbors, then the neighbors of its neighbors
and so on. This branching process is likely to stop rapidly
if the expected number of new neighbors is smaller than~$1$.
On the other hand, the component is likely to be large
if this means is greater than~$1$.

We now explain why the expected number of new neighbors is smaller than $1$
only for subcritical hypergraphs.
Let us define the \emph{excess degree} of a vertex $v$ in an edge $e$
as the sum over all the other edges that contain $v$
of their sizes minus $1$
\[
    \operatorname{excess\ degree}(v,e) = 
    \sum_{\substack{v \in \tilde{e},\\ \tilde{e} \neq e}}
    \left( |\tilde{e}| - 1 \right).
\]
This is the number of neighbors we discover
when we arrive at the vertex $v$ from the edge $e$,
assuming they are distinct.
We now prove that the expected excess degree is smaller than~$1$
only for subcritical hypergraphs.
This provides an intuitive explanation for the birth of the giant component.

\begin{theorem}
    Let us consider a random hypergraph
    with $n$ vertices and excess $k = (\lambda - 1) n$,
    and a uniformly chosen pair $(v,e)$,
    where the vertex $v$ belongs to the edge~$e$.
    With the notations of Theorem~\ref{th:forest},
    the expected excess degree of $v$ in $e$
    is smaller than (resp. equal to or greater than) $1$,
    if $\lambda$ is smaller than (resp. equal to or greater than) $\Lambda$.
\end{theorem}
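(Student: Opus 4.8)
The plan is to compute the expected excess degree as a ratio of two expectations and read the threshold off the resulting closed form. Fix a hypergraph $G$ and write $d_v=\deg(v)$, $A_v=\sum_{e\ni v}(|e|-1)$. Since the excess degree of $v$ in $e$ equals $A_v-(|e|-1)$, summing over the $l(G)$ incident pairs gives
\[
S(G):=\sum_{(v,e)}\operatorname{excess\ degree}(v,e)=\sum_v(d_v-1)A_v,
\]
which also counts the configurations $(v,e,\tilde e,w)$ with $v\in e\cap\tilde e$, $e\neq\tilde e$, $w\in\tilde e\setminus\{v\}$. The quantity to evaluate is $\mathds{E}_{n,k}\!\left(S(G)/l(G)\right)$. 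Because $l(G)=n+m+k$ and the number of edges $m$ concentrates (the Gaussian limit law for the edge count gives $m=\frac{\weight(\zeta)}{\zeta}n(1+o(1))$ with high probability), $l(G)$ concentrates around $\weight'(\zeta)n$, so it suffices to compute $\mathds{E}_{n,k}(S)/\mathds{E}_{n,k}(l)$.

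I would obtain both moments by the marking-and-extraction scheme of Theorem~\ref{th:hp}, where $\hp_{n,k}=n^{n+k}[Y^{n+k}]e^{n\weight(Y)/Y}$ and the dominant saddle is $Y=\zeta$ with $\Psi(\zeta)=\lambda$. Weighting each hypergraph by $l(G)$ amounts to applying $x\partial_x$ to $\hp(z,w,x)=\sum_n e^{w\weight(nx)}z^n/n!$, and the same substitution $z\mapsto z/y$, $w\mapsto 1/y$, $x\mapsto y$ followed by $Y=ny$ yields $\mathds{E}_{n,k}(l)=\weight'(\zeta)\,n$. For $S$ I would point at $v$ (a factor $n$), extract from the edge set $e^{w\weight(nx)}$ an ordered pair of distinct edges through $v$, one of them $\tilde e$ carrying a marked extra vertex $w$, and leave the remaining edges free. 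A single edge through $v$ has generating function $x\weight'(nx)$; the delicate point is that $w$ is a generic vertex, so marking one of the non-$v$ slots of $\tilde e$ multiplies by the pool size and produces $nx^2\weight''(nx)$ rather than $x^2\weight''(nx)$. This gives the pointed generating function
\[
\Xi(z,w,x)=\sum_n n^2\,w^2x^3\,\weight''(nx)\,\weight'(nx)\,e^{w\weight(nx)}\frac{z^n}{n!}.
\]

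Applying the extraction and the change of variable $Y=ny$ reduces $\mathds{E}_{n,k}(S)\,\hp_{n,k}$ to $n\cdot n^{n+k}[Y^{n+k-1}]\weight''(Y)\weight'(Y)e^{n\weight(Y)/Y}$. Dividing by $\hp_{n,k}$ and evaluating both coefficients by the Large Powers Theorem of~\cite{FS09} at the common saddle $\zeta$ — the only differences being the polynomial prefactor $\weight''\weight'$ and the unit shift of the index, which contributes a factor $\zeta$ — I get $\mathds{E}_{n,k}(S)=\zeta\,\weight''(\zeta)\,\weight'(\zeta)\,n\,(1+o(1))$, hence
\[
\mathds{E}_{n,k}\!\left(\operatorname{excess\ degree}\right)=\frac{\mathds{E}_{n,k}(S)}{\mathds{E}_{n,k}(l)}\longrightarrow \zeta\,\weight''(\zeta).
\]
The threshold then follows from monotonicity: $z\mapsto z\weight''(z)=\sum_t\omega_t z^{t-1}/(t-2)!$ is strictly increasing, $\Psi$ is strictly increasing so $\zeta$ increases with $\lambda$, and by definition $\tau\weight''(\tau)=1$ with $\Lambda=\Psi(\tau)$. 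Thus $\zeta=\tau$ exactly when $\lambda=\Lambda$, and $\zeta\weight''(\zeta)$ is less than, equal to, or greater than $1$ according as $\lambda$ is less than, equal to, or greater than $\Lambda$; the computation for $\NMHP_{n,k}$ changes only lower-order factors and gives the same limit.

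The main obstacle is the honest bookkeeping behind $\Xi$: getting the pool-vertex factor $n$ right, and checking that the configurations I discard — a vertex occurring twice in an edge, repeated edges, or $w$ coinciding with a vertex of $e$ — contribute only to lower order, so that the leading constant $\zeta\weight''(\zeta)\weight'(\zeta)$ is unaffected. A secondary technical point is the uniform validity of the saddle-point ratio with the shifted coefficient index, together with the transfer of the concentration of $l(G)$ from the limit law for the number of edges.
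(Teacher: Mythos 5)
Your proposal is correct and follows essentially the same route as the paper: your $\Xi$ and your size-marked generating function are exactly the paper's $F'(1)$ and $F(1)$ (the paper packages both into a single probability generating function $F(u)$ marking the excess degree by $u$), both are evaluated by the Large Powers Theorem at the common saddle $\zeta$ with $\Psi(\zeta)=\lambda$, giving the limit $\zeta\,\weight''(\zeta)$, and the threshold follows from the same monotonicity of $\Psi$ and $z\weight''(z)$ together with $\tau\weight''(\tau)=1$, $\Lambda=\Psi(\tau)$. The one point where you go beyond the paper is the concentration step identifying $\mathds{E}\left(S/l\right)$ with $\mathds{E}(S)/\mathds{E}(l)$; the paper implicitly works with the pair-weighted expectation $F'(1)/F(1)$ and does not address that distinction.
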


\begin{proof}
    Let $F(u)$ denote the generating function
    of the degree excess of a marked vertex in a marked edge
    of a hypergraph with $n$ vertices and excess $k = (\lambda - 1) n$.
    The marked vertex and edge represent $v$ and $e$.
    Such a hypergraph can be decomposed into
    a hypergraph on $n-1$ vertices,
    an edge with one vertex marked 
    -- this is the edge $e$ that contains the vertex $v$ --
    and a set of edges with one vertex marked and another vertex replaced with a hole.
    Those last edges contain the neighbors of $v$
    that are counted by the excess degree.
    Introducing the variable $u$ to mark the excess degree of $v$
    and the variable $y$ for the excess of the hypergraph, we obtain
    \[
        F(u) = [y^{n+k}]
        e^{\frac{\Omega((n-1) y)}{y}}
        \Omega'(ny)
        e^{\Omega'(n y u)}.
    \]
    After the change of variable $n y \to y$, this expression becomes
    \[
        F(u) = n^{n+k} [y^{n+k}]
        e^{n \frac{\Omega \left(y - \frac{y}{n} \right)}{y}}
        \Omega'(y)
        e^{\Omega'(y u)},
    \]
    which can be approximated by
    \[
        F(u) = n^{n+k} [y^{n+k}]
        e^{n \frac{\Omega(y)}{y} - \Omega'(y) + \bigO(1/n)}
        \Omega'(y)
        e^{\Omega'(y u)}.
    \]
    The expected excess degree of $v$ in $e$ is then $F'(1)/F(1)$
    (see~\cite[Part C]{FS09})
    \[
        \mathds{E}(\operatorname{excess\ degree}) =
        \frac{F'(1)}{F(1)} =
        \frac{[y^{n+k}] 
            e^{n \frac{\Omega(y)}{y} - \Omega'(y) + \bigO(1/n)}
            \Omega'(y)
            y \Omega''(y)
            e^{\Omega'(y)}}
        {[y^{n+k}] 
            e^{n \frac{\Omega(y)}{y} - \Omega'(y) + \bigO(1/n)}
            \Omega'(y)
            e^{\Omega'(y)}}.
    \]
    The asymptotics of the coefficient extractions in the computation
    of $F(1)$ and $F'(1)$ are obtained using the Large Powers Theorem
    \cite[Theorem~VIII.8]{FS09}.
    The saddle-point $\zeta$ is characterized by
    \[
        \Psi(\zeta) = \lambda,
    \]
    and the limit value of the expectation is
    \[
        \lim_{n \to \infty}
        \mathds{E}(\operatorname{excess\ degree})
        =
        \zeta \Omega''(\zeta).
    \]
    Let us recall the equalities $\Lambda = \Psi(\tau)$ 
    and $\tau \Omega''(\tau) = 1$.
    Since $\Psi(z)$ and $z \Omega''(z)$ are increasing functions,
    it follows that
    when $\lambda$ is smaller than (resp. equal to or greater than) $\Lambda$,
    then $\zeta \Omega''(\zeta)$ is smaller than (resp. equal to or greater than)~$1$.
\end{proof}

    \section{Future Directions}


In the present paper, for the sake of the simplicity of the proofs,
we restrained our work to the case
where~$e^{\Omega(z)/z}$ is aperiodic.
This technical condition can be waived
in the same way Theorem~VIII.8 of~\cite{FS09} can be extended
to periodic functions.

In the model we presented, the weight~$\omega_t$ of an edge
only depends on its size~$t$. For some applications,
one may need weights that also vary with the number of vertices~$n$.
It would be interesting to measure the impact of this modification
on the phase transition properties described in this paper.

More generally, the study of the relation to other models,
as the one presented in~\cite{DN04} and~\cite{BJR08}, 
could lead to new developments and applications.



\bibliography{/home/elie/research/articles/bibliography/biblio}
\end{document}